\documentclass[a4paper, reqno]{amsart}
\usepackage{amsmath, amssymb, eucal, amscd, amstext, enumerate, mathrsfs, yfonts, amsfonts,comment, color}
\usepackage[plainpages=false,colorlinks,hyperindex,pdfpagemode=None,bookmarksopen,linkcolor=red,citecolor=blue,urlcolor=blue]{hyperref}
\usepackage{cite}

\topmargin -0.2in
\textwidth 6.25in
\textheight 8.6in
\oddsidemargin -0.2in
\evensidemargin -0.2in

\newtheorem{thm}{Theorem} 
\newtheorem{lem}[thm]{Lemma}

\newtheorem{prop}[thm]{Proposition}
\newtheorem{cor}[thm]{Corollary}

\theoremstyle{definition}
\newtheorem{defn}[thm]{Definition}

\newtheorem{problem}[thm]{Problem}

\theoremstyle{remark}

\newtheorem{rem}[thm]{Remark}

\newtheorem{eg}[thm]{Example}

\numberwithin{equation}{section}

\newcommand{\loc}{{\rm loc}}
\newcommand{\smnoind}{\noindent}

\newcommand{\CS}{\mathcal{S}}

\newcommand{\BC}{\mathbb{C}}

\newcommand{\BN}{\mathbb{N}}

\newcommand{\CC}{\mathcal{C}}
\newcommand{\CF}{\mathcal{F}}

\newcommand{\CP}{\mathcal{P}}

\newcommand{\KB}{\mathfrak{B}}
\newcommand{\KI}{\mathfrak{I}}

\newcommand{\KF}{\mathfrak{F}}

\newcommand{\KC}{\mathfrak{C}}
\newcommand{\KP}{\mathfrak{P}}
\newcommand{\KQ}{\mathfrak{Q}}
\newcommand{\KA}{\mathfrak{A}}

\newcommand{\rp}{\mathrm{p}}
\newcommand{\rz}{\mathrm{z}}
\newcommand{\dist}{\mathrm{dist}}

\newcommand{\f}{\mathbf{f}}
\newcommand{\scs}{{\sigma\mathbf{c}\sigma}}
\newcommand{\supp}{\od{\mathrm{supp}}\ \!}

\newcommand{\smf}{\mathrm{sf}}

\newcommand{\mc}{\mathrm{c}}

\newcommand{\s}{\mathrm{sp}}

\newcommand{\CPO}{C(X)_+^\mathrm{sp}}

\newcommand{\od}{\widetilde}

\parskip=0.5\baselineskip

\begin{document}

\title[A variant of Tingley's problem]{On a variant of Tingley's problem for some function spaces}

\author[Leung, Ng and Wong]{Chi-Wai Leung \and Chi-Keung Ng \and Ngai-Ching Wong}

\address[Chi-Wai Leung]{Department of Mathematics, The Chinese University of Hong Kong, Hong Kong, China}
\email{cwleung@math.cuhk.edu.hk}

\address[Chi-Keung Ng]{Chern Institute of Mathematics and LPMC, Nankai University, Tianjin 300071, China.}
\email{ckng@nankai.edu.cn}

\address[Ngai-Ching Wong]{Department of Applied Mathematics, National Sun Yat-sen University, Kaohsiung, 80424, Taiwan, \and
School of Mathematical Sciences, Tiangong University, Tianjin 300387, China.}
\email{wong@math.nsysu.edu.tw}

\date{\today}

\keywords{positive unit spheres, $L^p$-spaces, Tingley's problem, Radon-Nikodym derviatives}

\subjclass[2010]{Primary: 46B04, 46E15, 46E30, 47B65; Secondary: 46G12, 47B33, 47B49}

\begin{abstract}
Let $(\Omega, \mathfrak{A}, \mu)$ and $(\Gamma, \mathfrak{B}, \nu)$ be two arbitrary measure spaces, and $p\in [1,\infty]$.
Set
$$L^p(\mu)_+^\mathrm{sp}:= \{f\in L^p(\mu): \|f\|_p =1; f\geq 0\ \mu\text{-a.e.} \}$$
i.e., the positive part of the unit sphere of $L^p(\mu)$.
We show  that every metric preserving bijection $\Phi: L^p(\mu)_+^\mathrm{sp} \to L^p(\nu)_+^\mathrm{sp}$ can be extended (necessarily uniquely) to an isometric order isomorphism
from $L^p(\mu)$ onto $L^p(\nu)$.
A Lamperti form, i.e., a weighted composition like form, of $\Phi$ is provided, when $(\Gamma, \mathfrak{B}, \nu)$ is localizable (in particular, when it is $\sigma$-finite).

On the other hand, we show that for compact Hausdorff spaces $X$ and $Y$, if $\Phi$  is a metric preserving bijection
from the positive part of the unit sphere of $C(X)$ to that of $C(Y)$, then there is a homeomorphism $\tau:Y\to X$ satisfying $\Phi(f)(y) = f(\tau(y))$ ($f\in C(X)_+^\mathrm{sp}; y\in Y$).
\end{abstract}

\maketitle

\section{Introduction}

Tingley's problem (\cite{Tingley}) asks whether every metric preserving bijection between the unit spheres of two real Banach spaces extends to a bijective isometry between the two Banach spaces.
The problem has been studied by many people (see \cite{FP, FP17,FP18, FP18a,Ding09, Ding-C(X),Ding-L-infty,FW,Mori,MO,Tan,Tingley}).
In his survey paper \cite{Peralta18}, Peralta proposes a variant of Tingley's problem.
More precisely, for   an ordered Banach space $E$ with a generating cone, denote by
$$
E^\s_+ :=\{x\in E: x\geq 0\ \text{and}\ \|x\|=1\}
$$
the positive part of the unit sphere, or the \emph{positive unit sphere} in short, of $E$.
The problem of Peralta states as follows.

\begin{problem}\label{prob:1}
Let $\Phi: E^\s_+\to F_+^\s$ be a metric preserving bijection between the positive unit spheres of two ordered Banach spaces; namely,
$$
\|\Phi(x)-\Phi(y)\| = \|x-y\| \qquad (x,y\in E^\s_+).
$$
Is there a surjective positive linear isometry $\widetilde{\Phi}: E\to F$  extending $\Phi$?
\end{problem}

Problem \ref{prob:1} has a positive answer
when $E=F$  is the space $C_p(H)$ of the Schatten $p$-class operators
 on a complex Hilbert space $H$ for $p\in [1,\infty]$ (see \cite{MT03, MN12, Nagy13,Nagy18, Peralta19}).
This motivates Peralta  to restate in \cite[Problem 7]{Peralta18}
the following more specific  open problem of Mori (see \cite[Problem 6.3]{Mori}).

\begin{problem}\label{prob:2}
  Let $M$ and $N$ be von Neumann algebras and $p\in [1,\infty]$.  Let $\Phi: L^p(M)^\s_+ \to L^p(N)^\s_+$ be a metric preserving bijection
  between the positive unit spheres of two noncommutative $L^p$-spaces.  Is there a surjective linear isometry $\widetilde{\Phi}: L^p(M)\to
  L^p(N)$ extending $\Phi$?
\end{problem}

Let us state   what is known about Problem \ref{prob:2}:
\begin{itemize}
	\item It has a positive answer for $p=1$ (see \cite[Theorem 5.11(a)]{Mori} and \cite[Proposition 10]{LaNW-sf}).
	
	\item It has a positive answer for $p=2$ (this is precisely \cite[Proposition 3.7]{LNW-tran-prob}, because $L^2(M)_+$ is the self-dual cone in the standard form of $M$).
	
	\item It has a positive answer when $M=N=B(H)$, since then $L^p(B(H))=C_p(H)$ for all $p\in [1,\infty]$ (see \cite{MT03, MN12, Nagy13,Nagy18, Peralta19}).
\end{itemize}
Furthermore, backing up by our previous work  on non-commutative $L^p$-spaces in \cite{LeNW-Lp-shell}, it is believed that
Problem \ref{prob:2} should have a positive answer in general.
In fact, if we define
$L^p(M)_+^{[1-\epsilon,1]}: =\{ x\in L^p(M): x \geq 0\ \text{and}\ 1-\epsilon \leq \|x\|\leq 1\}$, then for a  semi-finite von Neumann $M$ and for $\epsilon\in (0,1]$, every metric preserving bijection $\Phi:L^p(M)_+^{[1-\epsilon,1]}\to L^p(N)_+^{[1-\epsilon,1]}$ extends to a surjective linear isometry between the whole noncommutative $L^p$-spaces (see \cite[Theorem 1.2]{LeNW-Lp-shell}).

In this paper, we will show that Problem \ref{prob:2} has a positive answer in the case when both $M$ and $N$ are  commutative,
namely the $L^p$-spaces for $p\in [1,\infty]$
(see Theorems \ref{thm:Lp} and \ref{thm:L-infty}), and we will also solve an analogue question for the
$C(X)$ spaces (see Theorem \ref{thm:C(X)}).
More precisely, we will present the concrete forms of
metric preserving bijections from $L^p(\mu)_+^\s$ onto $L^p(\nu)_+^\s$  (where $p\in [1, \infty]$) and the ones from
$C(X)_+^\s$ onto $C(Y)_+^\s$, respectively.
Actually, Theorems \ref{thm:Lp} and \ref{thm:L-infty} are slightly more general results than the solution for the commutative version of Problem \ref{prob:2}, because the $L^\infty(\mu)$-space  is not necessarily  a commutative von Neumann algebra unless
the underlying measure space $(\Omega,\KA,\mu)$ is localizable.

Concerning the proofs for Theorem \ref{thm:Lp}, we note that the argument in \cite{LeNW-Lp-shell} makes use of the fact that
the subset
$L^p(M)_+^{[1-\epsilon,1]}$ contains
many line segments.
However,  the positive  unit sphere $L^p(\mu)^\s_+$ contains no line segment at all if $p\in (1,\infty)$.
Hence, the argument in \cite{LeNW-Lp-shell} cannot be applied.
On the other hand, although there exist positive solutions for the original Tingley's problem for the whole
unit spheres of $L^p$-spaces and  $C(X)$ spaces
 (see \cite{Ding-C(X),Ding-L-infty,FW,Tan}), the techniques used in these solutions cannot
 be applied to the cases of the positive  unit spheres.

We will define  the notations and terminologies used in this paper, and collect some preliminary results in Section \ref{s:2}.
In particular, we will present a form of
the Radon-Nikodym theorem for localizable measure spaces, which we need in proving our results.
We will then solve, in Section \ref{s:3}, the positive type Tingley problem (i.e., Problem \ref{prob:1}) in the case when $p\in [1,\infty)$, $E=L^p(\mu)$ and $F=L^p(\nu)$.
The cases of Problem \ref{prob:1} for $E=C(X)$ and $F=C(Y)$ as well as for $E = L^\infty(\mu)$ and $F=L^\infty(\nu)$
will be solved in Section \ref{s:4}.

\section{Notations and preliminaries}\label{s:2}

\subsection{Some measure theory}
We  present some notations and preliminary results from  measure theory that are needed in the our discussion.
Let $(\Omega, \KA, \mu)$ be a measure space, where $\KA$ is a $\sigma$-algebra of subsets of the underlying set $\Omega$,
 and $\mu: \KA\to [0,\infty]$ is a measure; i.e.,
a countably additive function sending the empty set $\emptyset$ to zero.
Denote
\begin{gather}
\nonumber
\KA_\mu^0 := \{A\in \KA: \mu(A) = 0 \}, \quad  \KA_\mu^\f:=\{A\in \KA: \mu(A)<\infty \}, \\
\label{eqt:def-A-sigma}
\KA_\mu^\sigma:= \left\{{\bigcup}_{k\in \BN} A_k: A_1,A_2, ... \in \KA_\mu^\f \right\}
\quad\text{and}\quad \KA_\mu^\scs := \KA_\mu^\sigma \cup \{\Omega \setminus A: A\in \KA_\mu^\sigma \}.
\end{gather}
When the measure $\mu$ is understood, we will write $\KA^0, \KA^\f, \KA^\sigma$ and $\KA^\scs$ instead of $\KA^0_\mu, \KA^\f_\mu, \KA^\sigma_\mu$ and $\KA^\scs_\mu$ for simplicity.
Clearly, $\KA^\sigma$ and $\KA^\scs$ are, respectively, the $\sigma$-subring of $\KA$ and the $\sigma$-subalgebra of $\KA$, generated by $\KA^\f$.

If $E,F\in \KA$, we write
\begin{align}\label{eqt:def-equiv}
E\preceq F\quad&\text{when}\quad \mu(E\setminus F) = 0, \nonumber
\intertext{and}
E\equiv F\quad &\text{when} \quad E\preceq F\text{ and }F\preceq E.
\end{align}
We say that $E\in \KA$ is an \emph{essential supremum}  of a family $\{E_i \}_{i\in \KI}$ in $\KA$ if
	\begin{itemize}
	\item $E_i \preceq E$, for every $i\in \KI$;
	\item for any $F\in \KA$ satisfying $E_i \preceq F$ ($i\in \KI$), one has $E \preceq F$.
\end{itemize}
Notice that the ordinary union of a countable family in $\KA$ is an essential supremum for this family.

Let  $(\Omega, \KA, \mu)$ and $(\Gamma, \KB, \nu)$ be measure spaces.
Let $\KP$ and $\KQ$ be subrings of $\KA$ and $\KB$, respectively.
Suppose that a map $\Lambda: \KP\to \KQ$ satisfies the following conditions:
\begin{enumerate}[\ \ (R1)]
	\item $\Lambda(A)\equiv \emptyset$ if and only if $A\equiv \emptyset$ for every $A\in \KP$;
	\item $\Lambda(A_1\setminus A_2) \equiv \Lambda(A_1)\setminus \Lambda(A_2)$ for every $A_1,A_2\in \KP$;
	\item for  any $B\in \KQ$, there is $A\in \KP$ such that $\Lambda(A) \equiv B$.
\end{enumerate}
Then it follows from Conditions (R1) and (R2) that for any $A_1,A_2\in \KP$, one has
\begin{enumerate}[\ \ (R4).]
	\item $\Lambda(A_1)\preceq \Lambda(A_2)$ if and only if $A_1\preceq A_2$,
\end{enumerate}
because $A_1\preceq A_2$ if and only if $A_1\setminus A_2\equiv \emptyset$.
This, together with Condition (R3), tells us that $\Lambda$ will preserve essential suprema up to measure zero sets if they exist.
Therefore, we have:
\begin{enumerate}[\ \ (R5).]
	\item $\bigcup_{k\in \BN} \Lambda(A_k) \equiv \Lambda\big(\bigcup_{k\in \BN} A_k\big)$, for every $A_1,A_2, ... \in \KP$ with $\bigcup_{k\in \BN} A_k\in \KP$.
\end{enumerate}
Motivated by \cite{Lampeti58}, we call a map $\Lambda: \KP\to \KQ$  satisfying
(R1), (R2) and (R3) a \emph{regular set isomorphism}.

The equivalence class of an element $E\in \KP$ under the relation $\equiv$ will be denoted by $\od{E}$.
We put
\begin{equation}\label{eqt:def-widetilde-P}
\od{\KP}:= \{\od{E}:E\in \KP \}.
\end{equation}
Obviously, $\preceq$ induces an ordering $\leq$ on $\od{\KA}$.
The set complement and the intersection will induce the corresponding operations on $\od{\KA}$, denoted by $^\mathrm{c}$ and $\wedge$, respectively.
It is easy to see that $\od{\KA}$ is a Dedekind $\sigma$-complete Boolean algebra (see \cite[Theorem 321H]{Frem}).

A regular set isomorphism $\Lambda$ will induce a unique order isomorphism $\od{\Lambda}$ from $\od{\KP}$ onto $\od{\KQ}$
satisfying
\begin{equation}\label{eqt:Lambda-0}
\od{\Lambda}(\od{E}) = \od{\Lambda(E)} \qquad (E\in \KP)
\end{equation}
(see Condition (R4)).
Conversely, any order isomorphism $\od{\Lambda}$ from $\od{\KP}$ onto $\od{\KQ}$ will induce a (not necessarily unique) regular set isomorphism $\Lambda:\KP\to \KQ$ satisfying Relation \eqref{eqt:Lambda-0}.

\begin{defn}
A measure space $(\Omega, \KA, \mu)$ is said to be
\begin{enumerate}
	\item \emph{semi-finite} if for each $E\in \KA\setminus \KA^0$, there is an element $A\in \KA^\f \setminus \KA^0$ with $A\subseteq E$;
	
	\item \emph{localizable} if it is semi-finite, and essential suprema exist for all families of elements in $\KA$. 	
\end{enumerate}
\end{defn}
Obviously, $\sigma$-finite measure spaces are localizable.
If $(\Omega, \KA, \mu)$ is localizable, then $\od{\KA}$ is a Dedekind complete lattice.

\subsection{Radon-Nikodym type theorems for localizable measure spaces}
In showing our main result in Section \ref{s:3}, we need a form of the Radon-Nikodym theorem for localizable measure spaces.
However, the versions that we found from the literature  (e.g., \cite[Theorem 234O]{Frem} as well as Theorems 2.11 and 2.12 of \cite{CV})
do not seem to apply directly to our case.
Therefore, we present a few more forms of the Radon-Nikodym theorem in this section.
These forms could be useful in other situations, and they might be already known to experts.

We start with a convention.
For any function $g:\Omega \to \BC$, we set
\begin{equation}\label{eqt:def-supp}
\mathrm{supp}\ \! g:=\{x\in \Omega: g(x)\neq 0\}.
\end{equation}

\begin{lem}\label{lem:RN}
	Let $(\Omega, \KB, \nu)$ and $(\Omega, \KC,\lambda)$ be measure spaces with $(\Omega, \KB, \nu)$ being localizable.
	Suppose that $\KB^\sigma\subseteq \KC$ and $\KC^\sigma\subseteq \KB$.
	
	\noindent
	(a) Assume $\KB^0 \subseteq \KC^0$.
	There is a $\KB$-measurable function $g: \Omega\to [0,\infty)$ with
	\begin{equation}\label{eqt:RN-deriv}
	\lambda(E) = \int_E g\ \! d\nu \qquad (E\in \KC^\sigma)
	\end{equation}
if and only if
 there exists an element $A_0\in \KB$ such that for every $E\in \KC^\sigma$, one has
	$E\cap A_0\in \KB^\sigma$ and $\lambda(E) = \lambda(E\cap A_0)$  (note that this condition holds when $\KC^\sigma \subseteq \KB^\sigma$).
	
	\noindent
	(b) Assume that there is a non-negative $\KB$-measurable function $g$ satisfying Relation \eqref{eqt:RN-deriv}.
	Then such a function $g$ is unique up to   $\nu$-measure zero sets if and only if
	for every $A\in \KB^\sigma\setminus \KB^0$, one can find $E\in \KC^\sigma\setminus \KB^0$ with $E\subseteq A$ (this condition holds when $\KB^\sigma\subseteq \KC^\sigma$).
	In this case, we  write the unique function $g$ as ${d\lambda}/{d\nu}$ and call it the \emph{Radon-Nikodym derivative} of $\mu$ respect to $\nu$.
\end{lem}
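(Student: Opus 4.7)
The plan is to handle the two biconditionals (a) and (b) in turn, with absolute continuity $\KB^0 \subseteq \KC^0$ and a Radon--Nikodym theorem for the localizable measure $\nu$ as the central tools.

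For the necessity direction of (a), I would take $A_0 := \{g > 0\} \in \KB$, the support of $g$. Given $E \in \KC^\sigma$ written as $\bigcup_n E_n$ with $E_n \in \KC^\f$, the Chebyshev bound $\nu(E_n \cap \{g > 1/k\}) \leq k\int_{E_n} g\, d\nu = k\lambda(E_n) < \infty$ shows each $E_n \cap \{g > 1/k\}$ lies in $\KB^\f$, so
$$E \cap A_0 \;=\; \bigcup_{n,k\in\BN}\bigl(E_n \cap \{g > 1/k\}\bigr) \;\in\; \KB^\sigma.$$
The complementary decomposition $E \setminus A_0 = \bigcup_n(E_n \setminus A_0)$ then places $E \setminus A_0$ in $\KC^\sigma$ (each piece is in $\KC^\f$ by the inclusion $\KB^\sigma \subseteq \KC$ applied to $E_n \cap A_0$), and since $g$ vanishes off $A_0$ one gets $\lambda(E \setminus A_0) = \int_{E\setminus A_0} g\,d\nu = 0$, whence $\lambda(E) = \lambda(E \cap A_0)$ by countable additivity.

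For the sufficiency direction of (a), I would turn the hypothesis on $A_0$ into a Radon--Nikodym problem on the $\sigma$-ring $\KB^\sigma$ by setting $\tilde\lambda(B) := \lambda(B \cap A_0)$, which is well-defined because $B \cap A_0 \in \KB^\sigma \subseteq \KC$ by the same bookkeeping as above, and is absolutely continuous with respect to $\nu|_{\KB^\sigma}$ thanks to $\KB^0 \subseteq \KC^0$. Using localizability of $\nu$, I would then invoke a suitable Radon--Nikodym statement (gluing $\sigma$-finite derivatives produced on each $B \in \KB^\f$ via Dedekind completeness of $\od{\KB}$) to obtain a $\KB$-measurable $g: \Omega \to [0, \infty)$ supported on $A_0$ with $\int_B g\, d\nu = \tilde\lambda(B)$ for every $B \in \KB^\sigma$. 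For $E \in \KC^\sigma$, the hypothesis $E \cap A_0 \in \KB^\sigma$ then gives
$$\int_E g\, d\nu \;=\; \int_{E \cap A_0} g\, d\nu \;=\; \tilde\lambda(E \cap A_0) \;=\; \lambda(E \cap A_0) \;=\; \lambda(E).$$
This Radon--Nikodym gluing is the main obstacle I anticipate, since $\tilde\lambda$ need not be $\sigma$-finite on $\KB$ as a whole, and care is required to ensure that the derivative is real-valued rather than $[0,\infty]$-valued.

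For Part (b), I would argue both directions by perturbation. If the stated condition fails, there is $A \in \KB^\sigma \setminus \KB^0$ with no $\KC^\sigma$-subset of positive $\nu$-measure; for any $E \in \KC^\sigma$ one has $E \cap A \in \KC^\sigma$ piecewise (using $A \in \KB^\sigma \subseteq \KC$ and finiteness of $\lambda$ on each $\KC^\f$-piece) and $E \cap A \subseteq A$, so $\nu(E \cap A) = 0$, whence $g + \chi_A$ is another non-negative $\KB$-measurable function producing the same integrals on $\KC^\sigma$ but disagreeing with $g$ on a $\nu$-positive set, contradicting uniqueness. Conversely, assuming the condition, let $g_1, g_2$ be two derivatives and set $h := g_1 - g_2$. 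If $\nu(\{h > 0\}) > 0$, semi-finiteness of $\nu$ (implied by localizability) supplies $A' \in \KB^\f$ with $A' \subseteq \{h > 0\}$ and $\nu(A') > 0$; the condition then produces $E \in \KC^\sigma$ with $E \subseteq A'$ and $\nu(E) > 0$, and any $\KC^\f$-piece $E_n$ of $E$ with $\nu(E_n) > 0$ yields the direct contradiction between $\int_{E_n} h\, d\nu = \lambda(E_n) - \lambda(E_n) = 0$ and the strict positivity of $\int_{E_n} h\, d\nu$ (obtained from $h > 0$ on $E_n$ and $\nu(E_n) > 0$ via the layer-cake estimate $\int_{E_n} h \geq (1/k)\,\nu(E_n \cap \{h \geq 1/k\}) > 0$ for large $k$). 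The symmetric argument for $\{h < 0\}$ concludes $g_1 = g_2$ $\nu$-a.e.
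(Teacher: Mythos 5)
Your handling of the necessity direction of (a) and of both directions of (b) is essentially the paper's own argument: the same choice $A_0=\{g>0\}$, the same perturbation $g+\chi_{A}$ for the ``only if'' of (b), and the same semi-finiteness argument for the ``if'' of (b) (where you are in fact a shade more careful than the paper, since you pass to a piece $E_n\in\KC^\f$ with $\nu(E_n)>0$ before comparing the two integrals, which guards against $\lambda(E)=\infty$). The problem is the sufficiency direction of (a). There you replace the data by the auxiliary set function $\tilde\lambda(B):=\lambda(B\cap A_0)$ on $\KB^\sigma$ and aim for a real-valued $\KB$-measurable $g$ with $\int_B g\,d\nu=\tilde\lambda(B)$ for \emph{every} $B\in\KB^\sigma$. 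This intermediate claim is strictly stronger than Relation \eqref{eqt:RN-deriv} and is false in general: the hypothesis only controls $\lambda(E\cap A_0)$ for $E\in\KC^\sigma$, so $\tilde\lambda$ need not be finite, or even $\sigma$-finite, on sets of $\KB^\f$. Concretely, let $\Omega$ be a single point, $\KB=\KC=\{\emptyset,\Omega\}$, $\nu(\Omega)=1$, $\lambda(\Omega)=\infty$ and $A_0=\Omega$; then $\KC^\sigma=\{\emptyset\}$, every hypothesis of (a) holds (and $g=0$ already witnesses the conclusion), yet $\tilde\lambda(\Omega)=\infty$ cannot equal $\int_\Omega g\,d\nu$ for any real-valued $g$. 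So the ``care required to ensure the derivative is real-valued'' that you flag is not a removable technicality for $\tilde\lambda$: the classical Radon--Nikodym theorem on a set $B\in\KB^\f$ only yields a $[0,\infty]$-valued density here, possibly infinite on a set of positive $\nu$-measure.

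The paper avoids this by never representing $\tilde\lambda$ on all of $\KB^\sigma$: it applies the $\sigma$-finite Radon--Nikodym theorem only on the sets $E\cap A_0$ for $E\in\KC^\sigma$, where \emph{both} measures are automatically $\sigma$-finite because $\lambda(E_n\cap A_0)\leq\lambda(E_n)<\infty$ for the pieces $E_n\in\KC^\f$, and then glues the resulting compatible local derivatives $\bar g^E$ into a single $\KB$-measurable function using localizability of $\nu$ (Fremlin's Theorem 213N). Your route can be salvaged in the same spirit: accept a $[0,\infty]$-valued derivative for $\tilde\lambda$, note that $\{g=\infty\}$ meets each $E\in\KC^\sigma$ in a $\nu$-null set (since $\int_{E_n}g\,d\nu=\lambda(E_n\cap A_0)=\lambda(E_n)<\infty$), and redefine $g$ to be $0$ on $\{g=\infty\}$; this destroys the identity on $\KB^\sigma$ but preserves it on $\KC^\sigma$, which is all the lemma asserts. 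As written, however, the step is a genuine gap, and it is precisely where the restriction from $\KB^\sigma$ to $\KC^\sigma$ in the statement earns its keep.
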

\begin{proof}
(a) The ``Only if'' part:
Take $A_0:= \mathrm{supp}\ \! g$.
Consider $E\in \KC^\f$.
Since $\int_E g\ \! d\nu = \lambda(E) < \infty$, we know that $E\cap A_0\in \KB^\sigma$.
Moreover, as $\KB^\sigma\subseteq \KC$, one has $E\cap A_0\in \KC^\f$, and hence $\lambda(E) = \int_E g\ \! d\nu = \int_{E\cap A_0} g\ \! d\nu = \lambda(E\cap A_0)$.
The general case of $E\in \KC^\sigma$ follows from the $\sigma$-additivity.
	
\noindent
The ``if'' part:
Consider $E\in \KC^\sigma$ and put
$$\KB^E:= \{A\in \KB: A\subseteq  E\cap A_0 \}.$$
As $E\cap A_0\in \KB^\sigma$, we know that $\KB^E\subseteq \KB^\sigma\subseteq \KC$ and that $\nu|_{\KB^E}$ is a $\sigma$-finite measure on $E\cap A_0$.
The hypothesis  $\KB^0\subseteq \KC^0$ means that $\lambda|_{\KB^E}$ is absolutely continuous with respects to $\nu|_{\KB^E}$.
The usual Radon-Nikodym theorem (for $\sigma$-finite measures) will then give a  $\KB^E$-measurable function $g^E: E\cap A_0\to [0,\infty)$ such that
$\lambda(A) = \int_A g^E\ \! d\nu$ ($A\in \KB^E$).
Let us extend $g^E$ to a function $\bar g^E:E\to [0,\infty)$ by setting $\bar g^E(x) := 0$ ($x\in E\setminus A_0$).
Then $\bar g^E$ is $\KB$-measurable (notice that $E\in \KC^\sigma\subseteq \KB$) and we have
$$\lambda(E) = \lambda(E\cap A_0) = \int_{E\cap A_0} g^E\ \! d\nu = \int_E \bar g^E\ \! d\nu.$$
It is not hard to check that for any $E,F\in \KC^\sigma$, one has $g^F|_{E\cap F\cap A_0} = g^E|_{E\cap F\cap A_0}$ $\nu$-a.e..
From this, we have $\bar g^F|_{E\cap F} = \bar g^E|_{E\cap F}$ $\nu$-a.e.
By \cite[Theorem 213N]{Frem}, there is a
$\KB$-measurable function $g:\Omega \to [0,\infty)$ such that for every $E\in \KC^\sigma$, we have $g|_E = \bar g^E$ $\nu$-a.e.
Hence,
$$\lambda(E) = \int_E \bar g^E\ \! d\nu  = \int_E g\ \! d\nu \qquad (E\in \KC^\sigma).$$
	
	\noindent
	(b) The ``Only if'' part:
	Assume on the contrary that there exists $A_1\in \KB^\sigma \setminus \KB^0$ such that for every $E\in \KC^\sigma$ with $E\subseteq A_1$, one has $\nu(E) = 0$ (recall that $\KC^\sigma\subseteq \KB$).
	Define $g':\Omega \to \BC$ such that $g'|_{\Omega\setminus A_1} = g|_{\Omega\setminus A_1}$ and $g'(x):=g(x) +1$ for every $x\in A_1$.
	Consider $F\in \KC^\sigma$ ($\subseteq \KB$).
	As $F\cap A_1\in \KC^\sigma$ (note that $\KB^\sigma \subseteq \KC$) and $F\cap A_1\subseteq A_1$, one knows from the above assumption that $\nu(F\cap A_1) = 0$.
	Thus,
	$$\lambda(F) = \int_{F}g \ \!d\nu  = \int_{F\setminus A_1}g \ \!d\nu = \int_{F\setminus A_1}g' \ \!d\nu = \int_{F}g' \ \!d\nu.$$
	However, $\nu\big\{ x\in \Omega: g(x)\neq g'(x) \big\} > 0$.
	
	\noindent
	The ``if'' part:
	Suppose that $g'$ is another non-negative $\KB$-measurable function satisfying Relation \eqref{eqt:RN-deriv}.
	Assume on the contrary that
	$$F:=\{x\in \Omega: g(x) < g'(x) \}\notin \KB^0.$$
	Since $(\KB,\nu)$ is semi-finite, we can find $A\in \KB^\f\setminus \KB^0$ such that $A\subseteq F$.
	The hypothesis gives $E\in \KC^\sigma\setminus \KB^0$ with $E\subseteq A$.
	From this, we know that $\int_E g \ \!d\nu = \lambda(E) = \int_E g' \ \!d\nu$.
	However, as $\nu(E)>0$ (recall that $\KC^\sigma \subseteq \KB$) and $E\subseteq F$, we have the contradiction that $\int_E g' - g \ d\nu > 0$.
	A similar contradiction arises if $\{x\in \Omega: g(x) > g'(x) \}\notin \KB^0$.
\end{proof}

\begin{rem}
(a) Observe that in Lemma \ref{lem:RN}
for the integration $\int_E g\ \! d\nu$ to have meaning for all $E\in \KC^\sigma$, we need $\KC^\sigma\subseteq \KB$.
On the other hand,  for the condition $\lambda(E\cap A_0)$ in part (a) to make sense, we need the requirement of $\KB^\sigma \subseteq \KC$.

\smnoind
(b) Using the argument of Lemma \ref{lem:RN} (replacing $\KB^E$ by $\{A\in \KB:A\subseteq E \}$ in the proof of part (a)), one can show that if $\KB^\sigma\subseteq \KC$ and $\KB^0\subseteq \KC^0$, then there exists a unique (up to $\nu$-measure zero sets) $\KB$-measurable function $g:\Omega\to [0,\infty)$ such that $\lambda(E) = \int_E g\ \! d\nu$ for every $E\in \KB^\sigma$, instead of $E\in \KC^\sigma$.
Some authors prefer to have this form of the Radon-Nikodym theorem (see, e.g., \cite[Theorem 19.27]{HS}).
However, in this case, the resulting function $g$ may not be good enough to recover the essential part of $\lambda$, because it could happen that $\KC^\sigma\setminus \KB^\sigma$ is a very large collection.
For example, let $\nu$ be the counting measure on the collection $\KB$ of all subsets of $[0,1]$ and $\lambda$ be the Lebesgue measure on the collection $\KC$ of Borel subsets on $[0,1]$.
Then $\KB^\sigma$ is the collection of all countable subsets of $[0,1]$, but countable subsets are of zero Lebesgue measure.
In this case, if $0$ is the constant zero function, then $\lambda(E) = \int_E 0\ \! d\nu$ for each $E\in \KB^\sigma$, but this function gives no information about $\lambda$.

\smnoind
(c) Equality \eqref{eqt:RN-deriv} is equivalent to the condition that
$\int f\ \! d\lambda = \int fg \ \!d\nu$ for every $\lambda$-integrable function $f$.
\end{rem}

The following is a well-known fact.

\begin{lem}\label{lem:RN>sf}
Let $(\Omega, \KB, \nu)$ and $(\Omega, \KC,\lambda)$ be measure spaces such that $(\KB, \nu)$ is semi-finite and $\KC\subseteq \KB$.
Suppose that there is a  $\KB$-measurable function $g: \Omega\to [0,\infty)$ satisfying
\begin{equation}\label{eqt:str-RN}
\lambda(E) = \int_E g\ \! d\nu \qquad (E\in \KC).
\end{equation}
Then $(\KC, \lambda)$ is semi-finite.
If, in addition, $\KB^0\subseteq \KC$, then we have $\KB^0\subseteq \KC^0$.
\end{lem}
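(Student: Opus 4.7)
The plan is to handle the two assertions independently: the inclusion $\KB^0\subseteq \KC^0$ falls out immediately from the integral identity, while the semi-finiteness of $(\KC,\lambda)$ is obtained by truncating the density $g$ and then invoking the semi-finiteness of $\nu$ on $\KB$.

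For the inclusion, given $N\in \KB^0$ the extra hypothesis places $N$ in $\KC$, so the defining relation $\lambda(N)=\int_N g\,d\nu$ gives $\lambda(N)=0$ because $g\geq 0$ and $\nu(N)=0$. Hence $N\in \KC^0$.

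For the semi-finiteness claim, fix $E\in \KC$ with $\lambda(E)>0$; we want a subset $A$ of $E$ in $\KC$ with $0<\lambda(A)<\infty$. If $\lambda(E)<\infty$ the choice $A:=E$ works, so suppose $\lambda(E)=\infty$. My idea is to truncate $g$: for each $n\in \BN$ set
\[
A_n := E\cap\{1/n<g\leq n\},
\]
which lies in $\KB$ since $g$ is $\KB$-measurable and $E\in \KC\subseteq \KB$. These sets increase in $n$ to $E\cap\{g>0\}$ because $g$ is real-valued, so by monotone convergence $\int_{A_n} g\,d\nu \uparrow \int_E g\,d\nu=\infty$; in particular $\int_{A_n} g\,d\nu>0$ for all sufficiently large $n$, and the two-sided bound $1/n<g\leq n$ on $A_n$ yields $(1/n)\nu(A_n)\leq \int_{A_n} g\,d\nu\leq n\,\nu(A_n)$, so $\nu(A_n)>0$. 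Semi-finiteness of $\nu$ on $\KB$ then supplies $F\in \KB$ with $F\subseteq A_n$ and $0<\nu(F)<\infty$, whence $0<(1/n)\nu(F)\leq \lambda(F)\leq n\,\nu(F)<\infty$.

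The main obstacle I anticipate is that the subset $F$ produced above lies a priori in $\KB$, whereas the definition of semi-finiteness of $(\KC,\lambda)$ literally demands $A\in \KC$. I would bridge this gap by exploiting the defining identity on $\KC$ once more in conjunction with the $\sigma$-additivity of $\lambda$ on $\KC$: the identity forces any $E\in \KC$ with $\lambda(E)>0$ to admit a countable $\KC$-measurable decomposition compatible with the truncation levels of $g$, at least one piece of which must have finite positive $\lambda$-measure, thereby reducing the required $\KC$-level extraction to the $\KB$-level one already performed.
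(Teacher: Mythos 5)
The paper offers no proof of this lemma at all (it is introduced with ``The following is a well-known fact''), so there is no argument of the authors to compare yours against; I can only assess your proposal on its own terms. Your treatment of the second assertion ($\KB^0\subseteq\KC^0$) is correct and complete. Your truncation argument is also the right device: inside any $E\in\KC$ with $\lambda(E)=\infty$ it produces a set $F\in\KB^\f\setminus\KB^0$ on which $1/n<g\leq n$, hence with $0<\int_F g\,d\nu<\infty$. The genuine gap is exactly the one you flag at the end: $F$ lies only in $\KB$, while semi-finiteness of $(\KC,\lambda)$ demands a subset belonging to $\KC$ (indeed $\lambda(F)$ is not even defined unless $F\in\KC$, so your displayed inequality for $\lambda(F)$ is an abuse of notation). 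Your proposed bridge --- that the identity $\lambda(E)=\int_E g\,d\nu$ ``forces'' a countable $\KC$-measurable decomposition of $E$ compatible with the truncation levels of $g$ --- is not an argument, and no argument can close this gap from the stated hypotheses alone. Concretely: take $\Omega=\BR$, $\KB$ the Borel sets, $\nu$ Lebesgue measure, $g\equiv 1$, and $\KC=\{\emptyset,\BR\}$ with $\lambda(\BR)=\infty$. All hypotheses of the lemma hold, yet $\KC$ contains no subset of $\BR$ of finite positive measure, so $(\KC,\lambda)$ is not semi-finite; the statement as literally written is false.

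The missing ingredient is an inclusion such as $\KB^\f\subseteq\KC$, which does hold in every place the paper invokes this lemma, since Proposition \ref{prop:R-N} always assumes $\KB^\sigma\subseteq\KC$. Under that extra hypothesis your proof closes immediately: the set $F$ you construct lies in $\KB^\f\subseteq\KC$, and then $\lambda(F)=\int_F g\,d\nu\in(0,\infty)$ as required. So the correct repair is not to hunt for a $\KC$-measurable decomposition of $E$, but to recognize that the lemma needs (and, where it is used, has) the inclusion $\KB^\f\subseteq\KC$, and to state that hypothesis explicitly before finishing with the $F$ you already have.
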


\smallskip

\begin{prop}\label{prop:R-N}
Let $(\Omega, \KB, \nu)$ and $(\Omega, \KC,\lambda)$ be measure spaces such that $(\Omega, \KB, \nu)$ is localizable.

\noindent
(a) Suppose that $\KB^\sigma\subseteq \KC\subseteq \KB$ (in particular, when $\KC = \KB$).
Then there is a  $\KB$-measurable non-negative function $g$ satisfying
Relation \eqref{eqt:str-RN}
if and only if the following conditions hold
\begin{enumerate}
	\item $(\KC,\lambda)$ is semi-finite;
	\item $\lambda$ is absolutely continuous with respect to $\nu$, i.e., $\KB^0 \subseteq \KC^0$;
	\item there exists $A_0\in \KB$ such that for each $E\in \KC^\sigma$, one has $E\cap A_0\in \KB^\sigma$ and $\lambda(E) = \lambda(E\cap A_0)$.
	\end{enumerate}
In this case, the function $g$ is unique up to $\nu$-measure zero sets.

\noindent
(b) Suppose that $\KC^\sigma\subseteq \KB^\sigma\subseteq \KC\subseteq \KB$.
There is a  $\KB$-measurable non-negative function $g$ satisfying Relation \eqref{eqt:str-RN}
if and only if $\KB^0\subseteq \KC^0$ and $(\KC, \lambda)$ is semi-finite.
In this case, $g$ is unique up to $\nu$-measure zero sets.

\noindent
(c) If $\KC^\sigma\subseteq \KB^\sigma\subseteq \KC$ and  $\KB^0\subseteq \KC^0$, then one can find a  $\KB$-measurable non-negative function $g$ satisfying Relation \eqref{eqt:RN-deriv}.

\noindent
(d) Suppose that $\KB^\sigma =  \KC^\sigma$.
	Then $\KB^0 \subseteq \KC^0$ if and only if there is a
	$\KB$-measurable non-negative function $g$ satisfying Relation \eqref{eqt:RN-deriv}.
In this case, $g$ is unique up to $\nu$-measure zero sets.
\end{prop}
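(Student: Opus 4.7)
My plan is to reduce each part to Lemmas \ref{lem:RN} and \ref{lem:RN>sf}, with the recurring themes that condition (iii) of (a) is automatic with $A_0=\Omega$ whenever $\KC^\sigma\subseteq \KB^\sigma$, and that the gap between Relations \eqref{eqt:str-RN} and \eqref{eqt:RN-deriv} is bridged by the semi-finiteness of $\lambda$.

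For part (a), the ``only if'' direction proceeds as follows. From $\KB^\sigma\subseteq \KC$ I get $\KB^0\subseteq \KB^\f\subseteq \KB^\sigma\subseteq \KC$, so Lemma \ref{lem:RN>sf} is applicable and yields both condition (i) (semi-finiteness of $(\KC,\lambda)$) and condition (ii) ($\KB^0\subseteq \KC^0$); condition (iii) is then precisely what the ``only if'' half of Lemma \ref{lem:RN}(a) produces, with $A_0=\{x:g(x)\neq 0\}$. In the ``if'' direction, Lemma \ref{lem:RN}(a) delivers a $\KB$-measurable $g\geq 0$ satisfying Relation \eqref{eqt:RN-deriv} on $\KC^\sigma$, and I extend to arbitrary $E\in \KC$ by cases: if $\lambda(E)<\infty$ then $E\in \KC^\f\subseteq \KC^\sigma$ and the identity is immediate, while if $\lambda(E)=\infty$ then semi-finiteness of $\lambda$ yields $A\in \KC^\f$ with $A\subseteq E$ and $\lambda(A)$ arbitrarily large, so $\int_E g\,d\nu\geq \int_A g\,d\nu=\lambda(A)$ forces $\int_E g\,d\nu=\infty=\lambda(E)$. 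Uniqueness comes from Lemma \ref{lem:RN}(b); to verify its hypothesis, for $A\in \KB^\sigma\setminus \KB^0$ I produce $E\in \KC^\sigma\setminus \KB^0$ with $E\subseteq A$ by taking $E=A$ when $\lambda(A)=0$ (so that $A\in \KC^\f\subseteq \KC^\sigma$) and extracting $E\in \KC^\f$ with $\lambda(E)>0$ via semi-finiteness of $\lambda$ when $\lambda(A)>0$, in which case $\nu(E)>0$ by (ii).

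Parts (b), (c), (d) then fall out as specializations. For part (b), the assumption $\KC^\sigma\subseteq \KB^\sigma$ makes condition (iii) of (a) automatic with $A_0=\Omega$, so (b) reduces to (a). Part (c) is a direct application of Lemma \ref{lem:RN}(a) with $A_0=\Omega$. For part (d), the direction $\KB^0\subseteq \KC^0\Rightarrow$ existence is again Lemma \ref{lem:RN}(a) with $A_0=\Omega$; for the converse, the key observation is that $\KB^\sigma=\KC^\sigma$ forces $\KB^0\subseteq \KB^\f\subseteq \KB^\sigma=\KC^\sigma\subseteq \KC$, so any $E\in \KB^0$ already lies in $\KC^\sigma$ and then Relation \eqref{eqt:RN-deriv} combined with $\nu(E)=0$ yields $\lambda(E)=0$. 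Uniqueness in (b) is obtained by the same argument as in (a), while in (d) it is immediate from Lemma \ref{lem:RN}(b) since one may take $E=A$. The main technical obstacle I anticipate is the extension step from $\KC^\sigma$ to $\KC$ in part (a)'s ``if'' direction, which is however handled cleanly by semi-finiteness of $\lambda$; a subsidiary point of care is that (iii) is formulated only for $E\in \KC^\sigma$ even though the conclusion of (a) concerns all $E\in \KC$, but this mild asymmetry is exactly what Lemma \ref{lem:RN}(a) both consumes and produces, so it fits the two-step structure of the argument naturally.
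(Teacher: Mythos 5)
Your proof is correct and follows essentially the same route as the paper: uniqueness and existence are both reduced to Lemmas \ref{lem:RN} and \ref{lem:RN>sf}, with (b), (c), (d) obtained by observing that $\KC^\sigma\subseteq\KB^\sigma$ makes condition (iii) trivial via $A_0=\Omega$. The only cosmetic difference is that where you explicitly upgrade Relation \eqref{eqt:RN-deriv} on $\KC^\sigma$ to Relation \eqref{eqt:str-RN} on all of $\KC$ using the semi-finiteness of $\lambda$ (the fact that $\lambda(E)=\sup\{\lambda(A):A\subseteq E,\ A\in\KC^\f\}$), the paper simply cites \cite[Lemma 213A]{Frem} for the same step.
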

\begin{proof}
(a) Let us first verify the uniqueness of $g$ via Lemma \ref{lem:RN}(b) (note that as Relation \eqref{eqt:str-RN} is stronger than Relation \eqref{eqt:RN-deriv}, the conclusion of Lemma \ref{lem:RN}(b) implies what we needed).
In fact, let $A\in \KB^\sigma\setminus \KB^0$.
If $\lambda(A) = 0$, then $A\in \KC^0 \setminus \KB^0$.
Otherwise, we have $A\in \KB^\sigma \setminus \KC^0 \subseteq \KC\setminus \KC^0$.
Now, the semi-finiteness of $(\KC,\lambda)$ (see Lemma \ref{lem:RN>sf}) produces $E\in \KC^\f\setminus \KC^0$ with $E\subseteq A$, and the second assertion of Lemma \ref{lem:RN>sf} implies $E\not\in \KB^0$.
In other words, the hypothesis of Lemma \ref{lem:RN}(b) is satisfied.

\noindent
The ``only if'' part:
Conditions (1) and (2) follows from Lemma \ref{lem:RN>sf}, while Condition (3) follows from Lemma \ref{lem:RN}(a).

\noindent
The ``if'' part:
It follows from Lemma \ref{lem:RN}(a) as well as Conditions (2) and (3) that there exists a  $\KB$-measurable function satisfying Relation \eqref{eqt:RN-deriv}.
Now, it follows from the semi-finiteness of $(\KC, \lambda)$  and \cite[Lemma 213A]{Frem} that Relation \eqref{eqt:str-RN} holds.

\noindent
(b)	As the inclusion $\KC^\sigma\subseteq \KB^\sigma$ implies Condition (3) of part (a), this part follows directly from part (a).

\noindent
(c) Since the inclusion $\KC^\sigma\subseteq \KB^\sigma$ implies Condition (3) of part (a), it follows from $\KB^0\subseteq \KC^0$ and Lemma \ref{lem:RN}(a) that there exists a $\KB$-measurable function $g$ satisfying Relation \eqref{eqt:RN-deriv}.

\noindent
(d)	Note that the uniqueness follows from Lemma \ref{lem:RN}(b), because $A\in\KC^\sigma$ for every $A\in \KB^\sigma\setminus \KB^0$.

\noindent
The ``only if'' part:
As  the inclusion $\KC^\sigma\subseteq \KB^\sigma$ implies Condition (3) of part (a), Lemma \ref{lem:RN}(a) applies and we obtain the required function $g$.

\noindent
The ``if'' part:
For any $E\in \KB^0\subseteq \KB^\sigma = \KC^\sigma$, one can obtain from Relation \eqref{eqt:RN-deriv} that $\lambda(E) = 0$.
\end{proof}

\smallskip

\begin{rem}
In the situation when $\KB \subseteq \KC$, we may apply part (a) above to the measure space $(\Omega, \KB, \lambda|_\KB)$.
In this case, Proposition \ref{prop:R-N}(a) is similar to \cite[Theorem 234O]{Frem}, but it seems that our presentation is more accessible and
easier to use.
\end{rem}

\section{The case of $L^p(\mu)$ where $p\in [1,\infty)$}\label{s:3}

Let $(\Omega, \KA, \mu)$ be a measure space and $p\in [1, \infty)$.
Let $L^p(\mu)$ be the Banach space of (the equivalence classes of)
absolutely $p$-integrable functions on $(\Omega, \KA, \mu)$.
As a convention, by saying $f\in L^p(\mu)$, we mean that $f$ is an absolutely $p$-integrable function representing the equivalence class of
measurable functions $g$ satisfying $\mu\{x\in\Omega : f(x)\neq g(x)\}=0$.

Let us recall that if $(\Omega, \KA_\mathrm{C}, \mu_\mathrm{C})$ is the Caratheodory extension of $(\Omega, \KA, \mu)$ (i.e., $\KA_\mathrm{C}$ is the largest $\sigma$-algebra on which the outer measure induced by $\mu$ is a measure), then $L^p(\mu)\cong L^p(\mu_\mathrm{C})$ canonically.
Therefore, one can only expect to recover $(\Omega, \KA_\mathrm{C}, \mu_\mathrm{C})$ completely from $L^p(\mu)$ at best.
However, notice that $\od{\KA^\sigma} \cong \od{\KA_\mathrm{C}^\sigma}$ (see \eqref{eqt:def-widetilde-P} for the notation).

For the clarity with notations, we set
\begin{equation*}
\supp f := \od{\mathrm{supp}\ \! f}  \qquad (f\in L^p(\mu));
\end{equation*}
that is, $\supp f$ is the equivalence class of the measurable sets represented by
$\mathrm{supp}\ \! f$ (see \eqref{eqt:def-supp}) under the relation $\equiv$ defined in \eqref{eqt:def-equiv}.
Note that if $g$ is another function in the equivalence class represented by $f$, then $\od{\mathrm{supp}\ \! g} = \od{\mathrm{supp}\ \! f}$,
and thus $\supp f$ is well-defined.
Moreover, one has
\begin{equation}\label{eqt:KA-sigma}
\od{\KA^\sigma} = \{\supp f: f\in L^p(\mu)\}, \quad \text{which implies} \quad L^p(\mu) \cong L^p(\mu|_{\KA^\scs})\ \text{canonically}.
\end{equation}
These two statements are not true when $p=\infty$.

As in the introduction, we set
$L^p(\mu)_+^\s$ to be the positive unit sphere of $L^p(\mu)$.
Moreover, we put
\begin{align*}
L^p_F(\mu)_{+}^\s := \{f\in L^p(\mu)_+^\s: \supp f \leq \od{F} \}\qquad (F\in \KA),
\end{align*}
(where $\leq$ is the ordering of the lattice $\od{\KA}$),
and define
$$\dist\big(f, L^p_F(\mu)_+^\s\big):= \inf\big\{\|f-g\|_p: g\in  L^p_F(\mu)_+^\s\big\}.$$

\begin{lem}\label{lem:dist}
If $f\in L^p(\mu)_+^\s$ and $F\in \KA$, then
$$
\dist\big(f, L^p_F(\mu)_+^\s\big) = 1 - \|f|_F\|_p^p + (1 - \|f|_F\|_p)^p.
$$
\end{lem}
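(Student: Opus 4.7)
The plan is to decompose the integral $\|f-g\|_p^p = \int_\Omega |f-g|^p\,d\mu$ into the pieces over $F$ and $\Omega \setminus F$, and to handle each piece separately. Since any $g \in L^p_F(\mu)_+^{\s}$ satisfies $g = 0$ $\mu$-a.e.\ on $\Omega \setminus F$, the piece on $\Omega \setminus F$ equals $\int_{\Omega \setminus F} f^p\,d\mu = 1 - \|f|_F\|_p^p$, a quantity independent of $g$. Thus
$$\|f-g\|_p^p \;=\; (1 - \|f|_F\|_p^p) \;+\; \|f|_F - g|_F\|_{L^p(F)}^p,$$
and the problem reduces to minimizing $\|f|_F - g|_F\|_{L^p(F)}$ over all nonnegative $g|_F \in L^p(F)$ with $\|g|_F\|_p = 1$.

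To obtain the lower bound, set $a := \|f|_F\|_p$, and note $0 \le a \le \|f\|_p = 1$. The reverse triangle inequality in $L^p(F)$ yields
$$\|g|_F - f|_F\|_{L^p(F)} \;\geq\; \big|\|g|_F\|_p - \|f|_F\|_p\big| \;=\; 1 - a,$$
so that $\|f-g\|_p^p \ge (1-a^p) + (1-a)^p$, giving the $\ge$ direction of the claimed equality.

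For attainment (assuming $L^p_F(\mu)_+^{\s}$ is nonempty, i.e.\ $\mu(F) > 0$), I would distinguish two cases. When $a > 0$, the natural candidate is $g := f|_F / a$, extended by zero on $\Omega \setminus F$. This clearly lies in $L^p_F(\mu)_+^{\s}$, and a direct computation on $F$ gives $f - g = f|_F(1 - 1/a) = -((1-a)/a) f|_F$, so $\int_F |f-g|^p\,d\mu = ((1-a)/a)^p \cdot a^p = (1-a)^p$. When $a = 0$, meaning $f = 0$ $\mu$-a.e.\ on $F$, any $g \in L^p_F(\mu)_+^{\s}$ produces $\|f-g\|_p^p = 1 + 1 = 2$, which matches the formula $(1 - 0) + (1 - 0)^p$.

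The only real obstacle is a bookkeeping one: verifying that the reverse triangle inequality is actually an equality for the candidate $g = f|_F/a$ (which is automatic since $g$ is a positive multiple of $f|_F$), and making sure the degenerate subcases ($a = 0$, or $a = 1$ where the distance is $0$) are consistent with the formula. Everything else is a routine splitting of the integral, so the lemma follows by combining the lower bound from the reverse triangle inequality with the explicit near-minimizer.
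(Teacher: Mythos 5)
Your proof is correct and follows essentially the same route as the paper's: split $\|f-g\|_p^p$ over $F$ and $\Omega\setminus F$, get the lower bound from the reverse triangle inequality on $F$, and attain it with the normalized restriction $f|_F/\|f|_F\|_p$, treating $\|f|_F\|_p=0$ separately. No substantive differences.
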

\begin{proof}
Notice that if $\|f|_F\|_p^p = 0$, then $f\in L^p_{\Omega\setminus F}(\mu)_+^\s$, and we have $\|f - h\|_p^p =  \|f\|_p^p + \|h\|_p^p = 2$ for every $h\in L^p_F(\mu)_+^\s$, which gives the asserted equality.
In the following, we assume that $\|f|_F\|_p^p > 0$.
If we set $f_{F,1}:= \frac{f|_F}{\| f|_F\|_p}$, then $f_{F,1}\in L^p_F(\mu)_+^\s$ and
$$\big\|f - f_{F,1}\big\|_p^p = 1 - \big\| f|_{F}\big\|_p^p + \big\| f|_F - f_{F,1}\big\|_p^p = 1 - \big\| f|_{F}\big\|_p^p + \big(1 - \big\|f|_F\big\|_p\big)^p.$$
On the other hand, if $g\in L^p_F(\mu)_+^\s$, then it follows from $\| f|_{\Omega\setminus F}\|_p^p + \|f|_F\|_p^p = 1$ that
$$
\|f - g\|_p^p = \big\| f|_{\Omega\setminus F}\big\|_p^p + \big\|f|_F - g\big\|_p^p \geq 1 - \big\| f|_{F}\big\|_p^p + \big(1 - \big\|f|_F\big\|_p\big)^p.
$$
This completes the proof.
\end{proof}

Let $(\Omega, \KA, \mu)$ and $(\Gamma, \KB, \nu)$ be two measure spaces.
Suppose that $\Lambda: \KA^\sigma\to \KB^\sigma$ is a regular set isomorphism.
We define an extended real-valued  function $\mu^\Lambda:\KB^\sigma\to [0,  \infty]$ as follows:
\begin{equation}\label{eqt:def-mu-Lambda}
\mu^\Lambda(B) := \mu(A) \quad \text{when }A\in \KA^\sigma\text{ satisfying } \Lambda(A)\equiv B.
\end{equation}
Note that such an element $A$ always exists, because of Condition (R3).
Moreover, $\mu^\Lambda(B)$ is independent of the choice of $A$ (by Conditions (R1) and (R2)).
These show that $\mu^\Lambda$ is well-defined.
Furthermore, $\mu^\Lambda$ is $\sigma$-additive, due to Condition (R5).
We extends $\mu^\Lambda$ to a function on the $\sigma$-subalgebra $\KB^\scs\subseteq \KB$ generated by $\KB^\sigma$ (see \eqref{eqt:def-A-sigma}) through
\begin{equation}\label{eqt:def-mu-Lambda2}
\mu^\Lambda(F) = \infty \qquad (F\in \KB^\scs\setminus \KB^\sigma).
\end{equation}
It is easy to check that $\mu^\Lambda$ is a measure.
On the other hand, if we put
$$\Lambda(\Omega \setminus E):= \Gamma\setminus \Lambda(E) \qquad (E\in \KA^\sigma),$$
then it is not hard to check that $\Lambda :\KA^\scs \to \KB^\scs$  is a regular set isomorphism (note that one has $\Lambda(A_1\cap A_2) \equiv \Lambda(A_1)\cap \Lambda(A_2)$, because of Conditions (R2), (R4) and (R5)).
In view of \eqref{eqt:def-mu-Lambda}, we obtain an isometric order isomorphism
$\widehat{\Lambda}:L^p(\mu) \cong L^p(\mu|_{\KA^\scs}) \to L^p(\mu^\Lambda)$ for $p\in [1,\infty)$ such that
$$\widehat{\Lambda} (\mathbf{1}_{A}) = \mathbf{1}_{\Lambda(A)} \qquad (A\in \KA^\f);$$
here, $\mathbf{1}_{A}$ is the indicator function for the subset $A$ of $\Omega$.

\begin{thm}\label{thm:Lp}
Let $p\in [1,\infty)$, and let $(\Omega, \KA, \mu)$ and $(\Gamma, \KB, \nu)$ be  arbitrary measure spaces.
Suppose that $\Phi: L^p(\mu)_+^\s \to L^p(\nu)_+^\s$ is a metric preserving bijection.

\noindent
(a) Then $\Phi$ can be extended (uniquely) to an isometric order isomorphism from $L^p(\mu)$ onto $L^p(\nu)$.

\noindent
(b) There is a regular set isomorphism $\Lambda:\KA^\sigma \to \KB^\sigma$ such that for each $E\in \KA^\sigma$, one can find a $\KB^\scs$-measurable function $h^E:\Lambda(E)\to [0,\infty)$,  satisfying
\begin{equation*}\label{eqt:h-E-compat}
h^F|_{\Lambda(E)} = h^E \quad \nu\text{-a.e.}, \quad \text{for } E,F\in \KA^\sigma \text{ with }E\preceq F,
\end{equation*}
such that
$$
\Phi(g) = \widehat{\Lambda}(g) h^{E}\quad \nu\text{-a.e.}, \quad \text{for }  g\in L^p(\mu)_+^\s \text{ with } \supp g\leq \od{E}.
$$

\noindent
(c) If $(\Gamma, \KB, \nu)$ is localizable, then the function ${d\mu^\Lambda}/{d\nu}$ as in Lemma \ref{lem:RN} exists and we have
\begin{equation}\label{eqt:Lamp-form}
\Phi(g) = \widehat{\Lambda}(g) \Big(\frac{d\mu^\Lambda}{d\nu}\Big)^{1/p}\ \ \nu\text{-a.e.}\qquad (g\in L^p(\mu)_+^\s).
\end{equation}
If, in addition, $(\Omega, \KA, \mu)$ is also localizable, then $\Lambda$ extends to a regular set isomorphism from $\KA$ to $\KB$.
\end{thm}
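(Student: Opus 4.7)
The plan is to (i) use the metric of the positive unit sphere to reconstruct, at the level of equivalence classes modulo null sets, the Boolean lattice of measurable sets, producing $\Lambda$; (ii) identify local weights $h^E$ by a Radon--Nikodym type argument on the $\sigma$-finite piece $E$; (iii) glue these to a single derivative under localizability for (c) via Proposition \ref{prop:R-N}(c); and (iv) extend to all of $L^p(\mu)$ in (a) by positive homogeneity and the disjoint decomposition $f=f_+ - f_-$. The starting point: for $f,g\in L^p(\mu)_+^\s$ one has $\|f-g\|_p^p=2$ if and only if $\supp f\wedge\supp g=0$, so $\Phi$ preserves lattice orthogonality of positive unit vectors. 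Lemma \ref{lem:dist} further shows that, whenever a set $L^p_F(\mu)_+^\s$ is available, the value $\|f|_F\|_p$ is a strictly decreasing explicit function of $\dist(f,L^p_F(\mu)_+^\s)$, and is therefore metrically recoverable.

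\textbf{Step 2: the regular set isomorphism $\Lambda$.}
For $A\in\KA^\f\setminus\KA^0$ set $e_A:=\mathbf{1}_A/\mu(A)^{1/p}$ and define
\[
\od\Lambda(\od A):=\supp\Phi(e_A)\in\od{\KB^\sigma}.
\]
Since supports of positive $L^p$-functions lie in $\KA^\sigma$, the identity
\[
L^p_A(\mu)_+^\s=\bigl\{g\in L^p(\mu)_+^\s:g\perp h\text{ for every }h\in L^p(\mu)_+^\s\text{ with }h\perp e_A\bigr\}
\]
together with orthogonality preservation yields $\Phi(L^p_A(\mu)_+^\s)=L^p_B(\nu)_+^\s$ with $\od B=\od\Lambda(\od A)$, showing the prescription depends only on $\od A$. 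Properties (R1) and (R2) follow by applying orthogonality preservation to the pairs $A_1\preceq A_2$ and $A_1\setminus A_2$; (R5) follows from exhausting $A\in\KA^\sigma$ by a countable chain in $\KA^\f$ (this is where the extension from $\KA^\f$ to $\KA^\sigma$ takes place); (R3) follows by applying the same construction to $\Phi^{-1}$. Lifting via \eqref{eqt:Lambda-0} gives the regular set isomorphism $\Lambda:\KA^\sigma\to\KB^\sigma$ and the induced isometric order isomorphism $\widehat\Lambda$.

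\textbf{Step 3: weights and parts (a), (c).}
Fix $E\in\KA^\sigma$; both $E$ and $\Lambda(E)$ are $\sigma$-finite and $\Phi$ restricts to a metric preserving bijection $L^p_E(\mu)_+^\s\to L^p_{\Lambda(E)}(\nu)_+^\s$. For $E\in\KA^\f\setminus\KA^0$ put $h^E:=\mu(E)^{1/p}\,\Phi(e_E)$; applying Step 1 to $\Phi(e_E)$ and $F=\Lambda(A)$ for $A\preceq E$ gives
\[
\int_{\Lambda(A)}(h^E)^p\,d\nu=\mu(A)=\mu^\Lambda(\Lambda(A)).
\]
Checking $\Phi(g)=\widehat\Lambda(g)\cdot h^E$ first for $g=e_A$ with $A\preceq E$, then for positive finite combinations by the same metric recovery, and finally by $L^p$-density of simple functions in $L^p_E(\mu)_+^\s$, proves (b) on $L^p_E(\mu)_+^\s$; uniqueness of $h^E$ forces the compatibility $h^F|_{\Lambda(E)}=h^E$ for $E\preceq F$, and an exhaustion extends $h^E$ to arbitrary $E\in\KA^\sigma$. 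Part (a) follows by positive-homogeneous extension to $L^p(\mu)_+$ and then to $L^p(\mu)$ via $f=f_+-f_-$, since $\Lambda$ preserves disjointness and $\widehat\Lambda$ is already a linear isometry. For (c), the local densities satisfy $(h^E)^p=d(\mu^\Lambda|_{\Lambda(E)})/d(\nu|_{\Lambda(E)})$; under localizability of $(\Gamma,\KB,\nu)$, Proposition \ref{prop:R-N}(c) (applied with $\KC=\KB^\scs$, so that $\KC^\sigma=\KB^\sigma$, and with $\mu^\Lambda\ll\nu$ inherited from the $(h^E)^p$'s) glues them to a single derivative $d\mu^\Lambda/d\nu$, giving \eqref{eqt:Lamp-form}. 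When $(\Omega,\KA,\mu)$ is also localizable, the symmetric argument for $\Phi^{-1}$ extends $\Lambda$ to all of $\KA$.

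\textbf{Main obstacle.}
The crux is Step 2: building $\Lambda$ on $\KA^\sigma$ without any global $\sigma$-finiteness assumption. Individual $L^p$-functions see only $\sigma$-finite portions of $\Omega$, and $\od{\KA^\sigma}$ is merely Dedekind $\sigma$-complete (not complete), so one must pass from the finite-measure case to $\KA^\sigma$ by countable exhaustion while simultaneously verifying that $\od\Lambda$ respects set differences and countable essential suprema. The saving observations are that a regular set isomorphism only requires (R1)--(R3), which transfer to countable unions through (R5), and that supports of $L^p$-functions automatically live in $\KA^\sigma$, so countable exhaustions suffice throughout.
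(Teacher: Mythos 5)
Your proposal is correct and follows essentially the same route as the paper: support/orthogonality preservation yields the regular set isomorphism $\Lambda$ on $\KA^\sigma$, Lemma \ref{lem:dist} metrically recovers $\|f|_F\|_p$ and hence the measures $F\mapsto\int_F f^p\,d\mu$, and the Radon--Nikodym machinery of Section \ref{s:2} produces the weights $h^E$ and, under localizability, the global derivative $d\mu^\Lambda/d\nu$. The only differences are organizational (you seed $\od{\Lambda}$ with normalized indicators and glue local weights upward, whereas the paper defines $\od{\Lambda}(\supp g):=\supp\Phi(g)$ directly and proves the localizable case first), and these do not affect the substance of the argument.
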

\begin{proof}
As the argument for part (a) requires part (b), and the argument for part (b) requires part (c), we will first establish part (c).
Then we will prove part (b) before verifying part (a).

\noindent
(c) Let us first recall from Section \ref{s:2} that the set $\od{\KA}$ of equivalence classes as in \eqref{eqt:def-widetilde-P} is a Boolean algebra with its complement being denoted by $^\mathrm{c}$.
Moreover, as in Relation \eqref{eqt:KA-sigma}, one has
$$\od{\KA^\sigma}\setminus \{\od{\emptyset} \} = \{\supp g: g\in L^p(\mu)_+^\s\}.$$
For any $A\in \KA^\sigma\setminus \KA^0$, choose $g_0\in L^p(\mu)_+^\s$ with $\supp g_0 = \od{A}$.
If $f\in L^p(\mu)_+^\s$ satisfying $\supp f\leq \od{A}^\mathrm{c}$, we know that $\|f - g_0\|_p^p = \|f\|_p^p + \|g_0\|_p^p = 2$, and hence, $\|\Phi(f) - \Phi(g_0)\|_p^p = 2$, which implies $\supp \Phi(f) \leq \supp \Phi(g_0)^\mathrm{c}$.
Since the converse also holds, we have
$$
\supp f \leq (\supp g_0)^\mc\quad\text{if and only if}\quad \supp \Phi(f) \leq \supp \Phi(g_0)^\mc.
$$
Suppose that $\od{A} = \supp g_1$ for another function $g_1\in L^p(\mu)_+^\s$.
If $\supp \Phi(g_0)\wedge \supp\Phi(g_1)^\mc\neq \od{\emptyset}$, then as $\supp \Phi(g_0)\wedge \supp\Phi(g_1)^\mc\in \od{\KB^\sigma}$, by the surjectivity of $\Phi$, we can find $f \in L^p(\mu)_+^\s$ with
$$\supp \Phi(f) = \supp \Phi(g_0)\wedge \supp\Phi(g_1)^\mc.$$
Therefore,	
$\supp f\leq (\supp g_1)^\mc = (\supp g_0)^\mc$.
However, this gives a contradiction that  $\supp \Phi(f) \leq \supp \Phi(g_0)^\mc$.
Thus, $\supp \Phi(g_0)\leq \supp\Phi(g_1)$.
Similarly, $\supp \Phi(g_1)\leq \supp\Phi(g_0)$, and we know that $\supp \Phi(g_0)$ depends only on $\od{A}$.
From this, as well as the corresponding fact for $\od{\KB^\sigma}\setminus \{\od{\emptyset} \}$, we obtain a bijection $\od{\Lambda}$ from $\od{\KA^\sigma}$ onto $\od{\KB^\sigma}$ such that
\begin{equation}\label{eqt:disj-pres}
\od{\Lambda}(\alpha_1)\wedge \od{\Lambda}(\alpha_2) = \od{\emptyset} \quad \text{if and only if}\quad  \alpha_1\wedge \alpha_2 = \od{\emptyset} \qquad (\alpha_1, \alpha_2\in \od{\KA^\sigma}),
\end{equation}
and that
$\od{\Lambda}\big(\supp g\big) = \supp \Phi(g)$ ($g\in L^p(\mu)_+^\s$).

Let $\alpha_1,\alpha_2\in \od{\KA^\sigma}$.
Then $\alpha_1 \leq \alpha_2$ if and only if for every $\alpha_3\in \od{\KA^\sigma}$ with $\alpha_2\wedge \alpha_3 = \od{\emptyset}$, one has $\alpha_1 \wedge \alpha_3 = \od{\emptyset}$ (observe that $\alpha_1 \wedge \alpha_2^\mc\leq \alpha_1\in \od{\KA^\sigma}$).
From this, we know that $\od{\Lambda}$ is order preserving.
By considering also $\Phi^{-1}$, one concludes that $\od{\Lambda}$ is an order isomorphism.
Hence, $\od{\Lambda}$ produces a regular set isomorphism $\Lambda: \KA^\sigma \to \KB^\sigma$ satisfying Relation \eqref{eqt:Lambda-0}, and we have
\begin{equation}\label{eqt:Lambda-Phi}
\od{\Lambda(E)} = \supp \Phi(f) \quad \text{ if $E\in \KA^\sigma$ and $f\in L^p(\mu)_+^\s$ with }\od{E} = \supp f.
\end{equation}
Note that the above holds for any measure spaces $(\Omega, \KA, \mu)$ and $(\Gamma, \KB, \nu)$ with no extra assumption.

Now, we consider the case when $(\Gamma,\KB,\nu)$ is localizable.
If we set $(\KC, \lambda) := (\KB, \mu^\Lambda)$, then $\KC^\sigma = \KB_{\mu^\Lambda}^{\sigma} = \KB_{\nu}^{\sigma}$, since $\Lambda$ is a set regular isomorphism from $\KA^\sigma$ to $\KB^\sigma$ (recall that $\KB^\sigma$ is the shorthanded form for $\KB_{\nu}^{\sigma}$).
Observe that $\mu^\Lambda(B) = 0$ if and only if $\nu(B) = 0$ (because of Condition (R1)).  By Proposition \ref{prop:R-N}(d), the Radon-Nikodym derivative ${d\mu^\Lambda}/{d\nu}: \Gamma \to [0,\infty)$ exists.
In particular,
$$\mu^\Lambda(\Lambda(A)) = \int_{\Lambda(A)} \frac{d\mu^\Lambda}{d\nu}\ \! d\nu \qquad (A\in \KA^\sigma).$$

Suppose that $\beta:= \supp d\mu^\Lambda/d\nu\neq \od{\Gamma}$.
As $(\KB, \nu)$ is semi-finite, there is $B\in \KB^\f\setminus \KB^0$ with $\od{B}\leq \beta^\mc$, and by Condition (R3), we may assume that $B = \Lambda(A)$ for some $A\in \KA^\sigma$.
Hence, $\mu^\Lambda(B) = \int_B \frac{d\mu^\Lambda}{d\nu}\ \!d\nu = 0$.
By the definition of $\mu^\Lambda$, we have $\mu(A) = 0$, and this  gives the contradiction that  $\nu(B) = 0$ (because of Condition (R1)).
Consequently,
\begin{equation}\label{eqt:non-zero-a.e.}
\left(\supp \frac{d\mu^\Lambda}{d\nu} \right)^\mc = \od{\emptyset}.
\end{equation}

On the other hand, it is clear that
$$g\mapsto g \left(\frac{d\mu^\Lambda}{d\nu}\right)^{1/p}$$
is an isometry from $L^p(\mu^\Lambda)$ to $L^p(\nu)$.
Define $\Psi: L^p(\mu)_+^\s \to L^p(\nu)_+^\s$ by $\Psi(f) = \widehat{\Lambda}(f) (d\mu^\Lambda/d\nu)^{1/p}$.
Then $\Psi$ is the restriction of a linear isometry from $L^p(\mu)$ to $L^p(\nu)$.
It remains to show that $\Phi^{-1}\circ \Psi$ is the identity map on $L^p(\mu)_+^\s$ (since $\Phi$ will then extend to a linear isometry and this isometry is automatically an order isomorphism).

Indeed, one knows from Relations \eqref{eqt:Lambda-Phi} and \eqref{eqt:non-zero-a.e.} that
$$\supp \Psi(f)^\mc  = \od{\Lambda}(\supp f)^\mc \vee (\supp d\mu^\Lambda/d\nu)^\mc = \supp \Phi(f)^\mc \qquad (f\in L^p(\mu)_+^\s).$$
This implies that
\begin{equation}\label{eqt:supp-Psi-1-Phi}
\supp \Phi^{-1}(\Psi(g)) = \supp g \qquad (g\in L^p(\mu)_+^\s).
\end{equation}
Fix $f\in L^p(\mu)_+^\s$ and put $\check f:= \Phi^{-1}\circ\Psi(f)$.
Consider $F\in \KA$.
It follows from Relation \eqref{eqt:supp-Psi-1-Phi} that
$$\dist\big(f, L^p_F(\mu)_+^\s \big)   = \dist\big(\check f, \Phi^{-1}\circ\Psi\big(L^p_F(\mu)_+^\s\big) \big) \geq \dist\big(\check f, L^p_F(\mu)_+^\s \big),$$
and we know from Lemma \ref{lem:dist} that
$$1 - \big\|f|_F\big\|_p^p + \big(1 - \big\|f|_F\big\|_p\big)^p \geq 1 - \big\|\check f|_F\big\|_p^p + \big(1 - \big\|\check f|_F\big\|_p\big)^p.$$
From this, we have $\int_F f^p d\mu = \big\|f|_F\big\|_p^p \geq \big\|\check f|_F\big\|_p^p = \int_F \check f^p d\mu$, for any $F\in \KA$.
This gives $f \geq \check f$ $\mu$-a.e.
However, since $\int_\Omega f^p d\mu = 1 = \int_\Omega \check f^p d\mu$, we know that $f = \check f$ $\mu$-a.e. as required.

Now, assume that both $(\KA,\mu)$ and $(\KB,\nu)$  are localizable.
Consider $\alpha\in \od{\KA}$, and set
$$\CC_\alpha:= \{\beta\in \od{\KA^\sigma}: \beta\leq \alpha \}.$$
The semi-finiteness of $(\KA,\mu)$ implies that $\alpha= \bigvee \CC_\alpha$.
We extend $\od{\Lambda}$ to a map from $\od{\KA}$ to $\od{\KB}$ by setting $\od{\Lambda}(\alpha) := \bigvee \od{\Lambda}(\CC_\alpha)$.
Since $(\KA,\mu)$ is also localizable, we can construct from $\Phi^{-1}$ a map from $\od{\KB}$ to $\od{\KA}$ and conclude that $\od{\Lambda}$ is a bijection from $\od{\KA}$ onto $\od{\KB}$.
Moreover, one obtains from Relation \eqref{eqt:disj-pres} that
$\od{\Lambda}(\alpha_1)\wedge \od{\Lambda}(\alpha_2) = \od{\emptyset}$ if and only if $\alpha_1\wedge \alpha_2 = \od{\emptyset}$ $(\alpha_1,\alpha_2\in \od{\KA}).$
Now, the argument as in the above shows that $\od{\Lambda}$ is an order isomorphism, which produces a regular set isomorphism from $\KA$ to $\KB$.

\noindent
(b) As noted above, the argument in the first part of the proof of
part (c) works for general measure spaces, and we obtain a regular set isomorphism
$\Lambda: \KA^\sigma\to \KB^\sigma$ satisfying Relation \eqref{eqt:Lambda-Phi}.
Consider $E\in \KA^\sigma$.
We set  $\KA^E:= \{F\in \KA: F\subseteq E \}$ and $\mu|_E:= \mu|_{\KA^E}$.
By Relation \eqref{eqt:Lambda-Phi},  $\Phi$ induces a metric preserving bijection
$$\Phi^E:L^p\big(\mu|_E\big)_+^\s \to L^p\big(\nu|_{\Lambda(E)}\big)_+^\s$$
such that $\Phi^E(f) = \Phi(f)|_{\Lambda(E)}$ for any $f\in L^p(\mu|_E)_+^\s$ (which is identified with $L^p_E(\mu)_+^\s$).
Since both $(E, \KA^E, \mu|_E)$ and $\big(\Lambda(E), \KA^{\Lambda(E)}, \nu|_{\Lambda(E)}\big)$
are $\sigma$-finite and thus localizable, part (c) produces a regular set isomorphism $\Lambda^E: \KA^E\to \KB^{\Lambda(E)}$
satisfying the corresponding form of Relation \eqref{eqt:Lamp-form}.
Since
$$\od{\Lambda^E}(\supp f) = \supp \Phi^E(f) \quad \text{and} \quad  \od{\Lambda}(\supp f) = \supp \Phi(f) \qquad (f\in L^p(\mu^E)_+^\s)$$
(see Relation \eqref{eqt:Lambda-Phi}), we know that $\od{\Lambda}|_{\od{\KA^E}} = \od{\Lambda^E}$, and we may replace $\Lambda^E$ by $\Lambda|_{\KA^E}$.
In particular, when $g\in L^p(\mu)_+^\s$ with $\supp g\leq \od{E}$, we have
$\widehat{\Lambda^E}(g) = \widehat{\Lambda}(g)$.
Moreover, if we set
\begin{equation}\label{eqt:def-h-E}
h^E := \big(d\mu|_E^{\Lambda|_{\KA^E}}/d\nu|_{\Lambda(E)}\big)^{1/p},
\end{equation}
then $\Phi(g) = \widehat{\Lambda}(g) h^E$ $\nu$-a.e.
On the other hand, by the uniqueness of the Radon-Nikodym derivative,  $h^F|_{\Lambda(E)} = h^E$ $\nu$-a.e., when $E,F\in \KA^\sigma$ satisfying $E\preceq F$.

\noindent
(a) Let us define $\bar \Phi:L^p(\mu)\to L^p(\nu)$ by
$$\bar \Phi(f) = \widehat{\Lambda}(f) h^{E_0}\quad \text{ when $f\in L^p(\mu)$ and $E_0\in \KA^\sigma$ with }\supp f = \od{E_0}$$
where $\Lambda:\KA^\sigma \to \KB^\sigma$ is the map defined as in part (b) and $h^{E_0}$ is as in Relation \eqref{eqt:def-h-E}.
Then $\|\bar \Phi(f)\| = \|f\|$ for any $f\in L^p(\mu)$.
Furthermore, by part (b), $\bar \Phi$ is a bijective linear  extension of $\Phi$ sending $L^p(\mu)_+$ onto $L^p(\nu)_+$.
\end{proof}

Notice that in the case of $\sigma$-finite measure spaces, the argument of part (c) above can be simplified quite a bit.
Moreover, the argument for part (b) only requires the $\sigma$-finite case of part (c).
Nevertheless, we choose to consider the more complicated case of localizable measure spaces.
It is due to the fact that $(\Gamma, \KB,\nu)$ is localizable exactly when $L^\infty(\nu)$ is a von Neumann algebra,
which is the starting point of this paper.

It is possible for $(\Omega, \KB^\scs, \nu|_{\KB^\scs})$ to be non-localizable, even if  $(\Omega, \KB, \nu)$ is localizable.
Moreover, the existence of a metric preserving bijection from $L^p(\mu)_+^\s$ onto $L^p(\nu)_+^\s$ may not imply that $\KA$ and $\KB$ are regular set isomorphic.
The following gives such an example.

\medskip

\begin{eg}\label{eg:non-local}
Let $\Gamma$ be an uncountable set, $\KB$ be the collection of all subsets of $\Gamma$ and $\nu$ be the
counting measure on $\KB$.
Consider $\KA= \KB^\scs$.
Then $\KA$ consists of all subsets that are either countable or co-countable.
It is clear  that $(\Gamma, \KB, \nu)$ is localizable, while $(\Gamma, \KA, \nu|_{\KA})$ is not.
Moreover, we have $L^p(\nu) = L^p(\nu|_{\KA})$ and their elements have the same norm (in fact, both coincide with $\ell^p(\Gamma)$).
However, there is no regular set isomorphism between $\KA$ and $\KB$, as $\od{\KA}$ and $\od{\KB}$ have different cardinality.
\end{eg}

In some special cases, we have a better presentation of $\Phi$ than the one in Theorem \ref{thm:Lp}(c) above.
For these, we need some more notations.

Let $\Omega$ be a Hausdorff topological space.
Denote by $\KB_{\mathrm{Bo}}$ and  $\KB_{\mathrm{Ba}}$ the collections of all Borel subsets and of all Baire subsets of $\Omega$, respectively.
Let $(\Omega,\KA,\mu)$ be a measure space such that $\KB_{\mathrm{Bo}}\subseteq \KA$ and $A\in \KA$.
We say that $\mu$ is \emph{inner regular} on $A$ if
\begin{equation}\label{eqt:def-inner-reg}
\mu(A)=\sup \{\mu(K): K\subseteq A; \text{ $K$ is compact in $\Omega$}\}.
\end{equation}

Recall that a Borel measure space $(\Omega, \KB_{\mathrm{Bo}}, \mu)$ is \emph{completion regular}
if the completion of $(\Omega, \KB_{\mathrm{Bo}}, \mu)$ coincides with the completion of the Baire measure space $(\Omega, \KB_{\mathrm{Ba}}, \mu|_{\KB_{\mathrm{Ba}}})$.
Examples of completion regular  Borel measure spaces include the cases when  $\Omega$ is metrizable or when $\Omega$ is a second countable
locally compact Hausdorff space (see e.g., \cite[p.89]{Lessard94}).
We also recall that a $\sigma$-finite Baire measure $(\KB_{\mathrm{Ba}}, \mu)$ on $\Omega$ is said to be \emph{tight} if it is equivalent to a finite Baire measure $(\KB_{\mathrm{Ba}}, \lambda)$ on $\Omega$ satisfying: for every $\epsilon > 0$,
there is a compact subset $K\subseteq \Omega$ such that $\lambda^*(K)\geq \lambda(\Omega) - \epsilon$; here $\lambda^*$ is the outer measure associated with $\lambda$.
Note that if $\Omega$ is $\sigma$-compact, then all $\sigma$-finite Baire measures are tight (see e.g., \cite[p.83]{Lessard94}).

\begin{cor}\label{cor:LpP}
Let $(\Omega, \KA, \mu)$ and $(\Gamma, \KB, \nu)$ be $\sigma$-finite measure spaces such that $\Omega$ is a Hausdorff topological space and  $(\KA, \mu)$ is either
a tight Baire measure, or
a completion regular Borel measure with all Borel subsets with finite measures being inner regular.
Suppose that $\Phi: L^p(\mu)_+^\s \to L^p(\nu)_+^\s$ is a metric preserving  bijection.
Then there exist a  non-negative $\KB$-measurable function $h$ on $\Gamma$
 and a $\KB_0$-$\KA$ measurable bijection $\psi : \Gamma \to \Omega$, such that
$$\Phi(f) = h (f\circ\psi)   \quad (f\in L^p(\mu))
\quad \text{and} \quad \mu(A) = \int_{\psi^{-1}(A)} h^p d\nu  \quad (A\in \KA);$$
here, $\KB_0$ is the $\sigma$-algebra of the completion of $(\Gamma, \KB, \nu)$.
\end{cor}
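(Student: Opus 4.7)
The plan is to apply Theorem \ref{thm:Lp}(c) to extract a regular set isomorphism $\Lambda: \KA\to \KB$, and then invoke a classical point-realization theorem to promote $\Lambda$ to a genuine measurable bijection $\psi: \Gamma\to \Omega$. Since $(\Gamma, \KB, \nu)$ is $\sigma$-finite and thus localizable, Theorem \ref{thm:Lp}(c) supplies a regular set isomorphism $\Lambda: \KA^\sigma \to \KB^\sigma$ together with the non-negative $\KB$-measurable function $h := (d\mu^\Lambda/d\nu)^{1/p}$ satisfying $\Phi(g) = \widehat{\Lambda}(g)\,h$ $\nu$-a.e. The $\sigma$-finiteness of both measure spaces forces $\Omega \in \KA^\sigma$ and $\Gamma \in \KB^\sigma$, hence $\KA^\sigma = \KA$ and $\KB^\sigma = \KB$, and $\Lambda$ is a regular set isomorphism from $\KA$ onto $\KB$. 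In the completion regular Borel case, $L^p(\mu)$ is canonically isometric to $L^p(\mu|_{\KB_{\mathrm{Ba}}})$, and the inner regularity of finite Borel subsets together with $\sigma$-finiteness makes $\mu|_{\KB_{\mathrm{Ba}}}$ tight; so without loss of generality I may assume throughout that $\KA = \KB_{\mathrm{Ba}}$ and $\mu$ is tight.

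The core step is to realize the abstract isomorphism $\Lambda$ by a pointwise map. Under tightness and $\sigma$-finiteness, one may restrict to a conull $\sigma$-compact subset of $\Omega$ on which the Baire $\sigma$-algebra is countably generated (by continuous functions). The classical point-realization theorem for Boolean measure-algebra isomorphisms (in the tradition of von Neumann, Halmos, and Maharam) then produces a measurable map $\psi': \Gamma' \to \Omega'$ defined between conull measurable subsets, satisfying $(\psi')^{-1}(A) \equiv \Lambda(A)$ for every $A \in \KA$. Because the excluded sets are $\nu$-null and hence lie in the completion $\KB_0$, I can extend $\psi'$ to a $\KB_0$-$\KA$ measurable bijection $\psi: \Gamma \to \Omega$ by choosing any bijection between the complementary null sets; this preserves the identity $\psi^{-1}(A) \equiv \Lambda(A)$ modulo $\nu$-null sets.

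With $\psi$ in hand, $\widehat{\Lambda}(\mathbf{1}_A) = \mathbf{1}_A \circ \psi$ $\nu$-a.e. for $A \in \KA^\f$; extending by linearity through simple functions and by density, $\widehat{\Lambda}(f) = f \circ \psi$ $\nu$-a.e. for every $f \in L^p(\mu)$. Combined with the first paragraph this yields $\Phi(f) = h\,(f\circ \psi)$, and the measure-transport identity follows from the definition of $\mu^\Lambda$ together with Lemma \ref{lem:RN}:
$$
\mu(A) \,=\, \mu^\Lambda(\Lambda(A)) \,=\, \int_{\Lambda(A)} h^p\,d\nu \,=\, \int_{\psi^{-1}(A)} h^p\,d\nu.
$$
The principal obstacle is the point-realization step: Theorem \ref{thm:Lp}(c) only yields an abstract Boolean measure-algebra isomorphism, and promoting it to an honest pointwise bijection genuinely relies on the topological-regularity hypotheses on $(\Omega,\KA,\mu)$, as Example \ref{eg:non-local} shows that without such assumptions no such realization need exist.
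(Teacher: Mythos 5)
Your overall architecture matches the paper's: extract the regular set isomorphism $\Lambda$ and the Radon--Nikodym weight $h=(d\mu^\Lambda/d\nu)^{1/p}$ from Theorem \ref{thm:Lp}, realize $\Lambda$ by a point map $\psi$, and transport the measure. The concluding computations ($\widehat{\Lambda}(f)=f\circ\psi$ by passing from indicators through simple functions and density, and $\mu(A)=\mu^\Lambda(\Lambda(A))=\int_{\psi^{-1}(A)}h^p\,d\nu$) are fine once $\psi$ exists. But the point-realization step --- which is the entire content of the corollary beyond Theorem \ref{thm:Lp} --- is not actually established. The paper disposes of it by noting that the linear extension $\bar\Phi$ is a Lamperti (disjointness-preserving) operator and citing \cite[Theorem 5.4]{Lessard94} and the proof of \cite[Corollary 5.6]{Lessard94}, whose hypotheses (tight Baire measure, or completion regular Borel measure with inner regularity on finite-measure Borel sets) are exactly those in the statement. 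You instead invoke ``the classical point-realization theorem'' of von Neumann--Halmos--Maharam after asserting that one may pass to a conull $\sigma$-compact subset of $\Omega$ on which the Baire $\sigma$-algebra is countably generated. That assertion is false in general: take $\Omega=\{0,1\}^I$ with $I$ uncountable and the product probability measure. This is a compact Hausdorff space carrying a tight, finite Baire measure, yet its Baire $\sigma$-algebra is not countably generated and its measure algebra is not separable, so no conull subset restores countable generation. The separable von Neumann-style realization theorem therefore does not apply, and for non-separable measure algebras an abstract Boolean isomorphism need not be induced by a point map into the given space unless the topology (tightness, inner regularity) is exploited in an essential way --- which is precisely what the cited results of Lessard do and what your argument omits. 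Acknowledging at the end that the topological hypotheses are ``genuinely relied on'' does not substitute for showing where and how they enter.

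A secondary gap: you extend $\psi'$ to a bijection of the whole spaces by ``choosing any bijection between the complementary null sets,'' but those two null sets need not have the same cardinality, so such a bijection need not exist; obtaining a genuine $\KB_0$-$\KA$ measurable bijection $\psi:\Gamma\to\Omega$ is itself part of what \cite[Corollary 5.6]{Lessard94} supplies and requires care. Also, your reduction of the completion regular Borel case to the tight Baire case is plausible but stated without proof; it is again handled inside Lessard's framework rather than being automatic.
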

\begin{proof}
By Theorem \ref{thm:Lp}, we know that $\Phi$ extends to an isometric order isomorphism $\bar{\Phi}$ from $L^p(\mu)$ onto $L^p(\nu)$.
The assertions are then consequences of the fact that  $\bar{\Phi}$ is a Lamperti operator, i.e., it sends disjoint functions to disjoint functions, together with \cite[Theorem 5.4]{Lessard94} and the proof of \cite[Corollary 5.6]{Lessard94}.
\end{proof}

\section{The case of $L^\infty(\mu)$ and the case of $C(X)$}\label{s:4}

Let $X$ be a compact Hausdorff space.
For $f\in C(X)_+^\s$, we define
$$
\rp(f) := \{x\in X: f(x) = 1\}\quad\text{and}\quad \rz(f):=\{x\in X: f(x)=0\},
$$
i.e., $\rp(f)$ is the set of peak points and $\rz(f)$ is the set of zeros, respectively, of $f$.
Moreover, we define
$$\CP:= \{f\in C(X)_+^\s: \rz(f) = \emptyset \}.$$
For every subset $\CS\subseteq C(X)_+^\s$, let
\begin{gather*}
\CS^\#:=\{g\in C(X)_+^\s: \|f-g\| <1, \text{ for all }g\in \CS \}.
\end{gather*}
For $E,F\subseteq X$, denote by
$$\CF_{F}^{E}:= \{g\in C(X)_+^\s: \rp(g)\subseteq E; \rz(g)\subseteq F\}$$
the collection of functions in $C(X)_+^\s$ with all peak points in $E$ and all zeros in $F$.

\begin{lem}\label{lem:S''}
(a) For a subset $\CS \subseteq C(X)_+^\s$, if we put $E_\CS:= {\bigcap}_{g\in \CS} X\setminus \rz(g)$ and $F_\CS:= {\bigcap}_{g\in \CS} X\setminus \rp(g)$, then
\begin{gather*}
\CS^\#= \CF_{F_\CS}^{E_\CS}.
\end{gather*}

\noindent
(b) Let $f\in C(X)_+^\s$.  We have $f\in \CP$
if and only if
 for every $\CS\subseteq C(X)_+^\s$ with $\CS^\#\neq \emptyset$, there is $g_0\in \CS^\#$ satisfying $\|f-g_0\| <1$.

\noindent
(c) If $g\in \CP$, then $\{g\}^{\#\#} = \big(\CF_{X\setminus \rp(g)}^X\big)^\# = \CF^{\rp(g)}_\emptyset$.

\noindent
(d) If $f,g\in \CP$ satisfying $\CF^X_{X\setminus \rp(f)}\subseteq \CF^X_{X\setminus \rp(g)}$, then $\rp(g)\subseteq \rp(f)$.
\end{lem}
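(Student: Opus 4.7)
The whole lemma turns on one pointwise observation: for $f,g\in C(X)_+^\s$, the inequality $\|f-g\|<1$ holds if and only if $\rp(f)\cap \rz(g)=\emptyset$ and $\rz(f)\cap \rp(g)=\emptyset$. Indeed, $0\leq f(x),g(x)\leq 1$ forces $|f(x)-g(x)|\leq 1$ with equality precisely when $\{f(x),g(x)\}=\{0,1\}$, and compactness of $X$ guarantees $\|f-g\|$ is attained. Part (a) is then a direct rewriting: $g\in \CS^\#$ says that for every $f\in \CS$ one has $\rp(f)\cap \rz(g)=\emptyset$ and $\rz(f)\cap \rp(g)=\emptyset$, which upon intersecting over $\CS$ rephrases as $\rz(g)\subseteq F_\CS$ and $\rp(g)\subseteq E_\CS$, i.e., $g\in \CF^{E_\CS}_{F_\CS}$.

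For part (b), the forward direction is a perturbation argument: given $f\in \CP$ and any $g\in \CS^\#$, set $g_0:=(g+\epsilon)/(1+\epsilon)$ for $\epsilon>0$, so $g_0\in C(X)_+^\s$ with $\rp(g_0)=\rp(g)\subseteq E_\CS$ and $\rz(g_0)=\emptyset\subseteq F_\CS$, hence $g_0\in \CS^\#$; since $\rz(f)=\rz(g_0)=\emptyset$, the pointwise observation yields $\|f-g_0\|<1$. For the converse I produce an obstructing $\CS$ when $\rz(f)\neq\emptyset$. Since $X$ is compact Hausdorff, hence normal, Urysohn's lemma furnishes, for each $y\in X\setminus \rz(f)$, a function $h_y\in C(X,[0,1])$ with $h_y|_{\rz(f)}\equiv 1$ and $h_y(y)=0$; I set $\CS:=\{h_y:y\in X\setminus \rz(f)\}$. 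A quick verification using (a) gives $E_\CS=\rz(f)$, and the explicit function $g(x):=1-f(x)/2$ satisfies $\rp(g)=\rz(f)\subseteq E_\CS$ and $\rz(g)=\emptyset\subseteq F_\CS$, so $\CS^\#\neq\emptyset$. On the other hand, every $g_0\in \CS^\#$ has $\emptyset\neq \rp(g_0)\subseteq E_\CS=\rz(f)$, forcing $\rz(f)\cap \rp(g_0)\neq\emptyset$ and thus $\|f-g_0\|=1$.

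Parts (c) and (d) then reduce to two more applications of (a) plus Urysohn. For (c), (a) with $\CS=\{g\}$ and $\rz(g)=\emptyset$ immediately gives $\{g\}^\#=\CF^X_{X\setminus \rp(g)}$. Applying (a) once more to this set: the constant function $1$ is a member, so $F_\CS=\emptyset$; the inclusion $\rp(g)\subseteq E_\CS$ is immediate from $\rz(h)\cap \rp(g)=\emptyset$ for every $h$ in the set, and for any $y\notin \rp(g)$ Urysohn produces $h\in C(X,[0,1])$ with $h|_{\rp(g)}\equiv 1$ and $h(y)=0$, so $y\in \rz(h)$ excludes $y$ from $E_\CS$. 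Hence $E_\CS=\rp(g)$ and $\{g\}^{\#\#}=\CF^{\rp(g)}_\emptyset$. For (d), I argue by contraposition: given $y\in \rp(g)\setminus \rp(f)$, Urysohn yields $h\in C(X,[0,1])$ with $h|_{\rp(f)}\equiv 1$ and $h(y)=0$, whence $\rz(h)\subseteq X\setminus \rp(f)$ but $y\in \rz(h)\cap \rp(g)$, showing $h\in \CF^X_{X\setminus \rp(f)}\setminus \CF^X_{X\setminus \rp(g)}$. The main technical hurdle in the plan is the converse of (b), where the nonemptiness of $\CS^\#$ has to be reconciled with forcing every member to peak only inside $\rz(f)$; the explicit construction of $\CS$ above, paired with the witness $1-f/2$, is what makes this work.
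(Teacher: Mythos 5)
Your proof is correct and follows essentially the same route as the paper: the pointwise characterization of $\|f-g\|=1$ for functions in $C(X)_+^\s$ drives part (a), the forward direction of (b) uses the same strictly-positive perturbation, the converse of (b) uses the same obstructing family together with the witness $1-f/2$, and (c), (d) rest on the same Urysohn constructions. The only (harmless) differences are cosmetic: in (c) you compute $E_{\CS}$ and $F_{\CS}$ for $\CS=\CF^X_{X\setminus \rp(g)}$ exactly and reapply (a) instead of arguing by contradiction, and in (d) you argue directly by contraposition rather than passing through (c).
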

\begin{proof}
(a) Suppose that $f\in \CS^\#$.
If there exists $x\in \rp(f)\setminus E_\CS$, then there is $g\in \CS$ with $x\in \rz(g)$.
We then have the contradiction that $\|f - g\| = 1$.
A similar contradiction will occur if $\rz(f)\setminus F_\CS\neq \emptyset$.
Conversely, let $f\in \CF_{F_\CS}^{E_\CS}$.
If there is $g\in \CS$ satisfying $\|f-g\| = 1$, then either $\rp(g)\cap \rz(f)\neq \emptyset$ or $\rz(g)\cap \rp(f)\neq \emptyset$.
However, any one of them   derives a contradiction.

\noindent
(b) Suppose that $\rz(f) =\emptyset$ and $\CS^\# \neq \emptyset$.
By part (a), we know that $\CS^\# = \CF_{F_\CS}^{E_\CS}$.
If $h\in \CS^\#$, then
$$g_0:=(1+h)/2\in \CF_{\emptyset}^{E_\CS} \subseteq \CS^\#,$$
and $\|f - g_0\| < 1$.
Conversely, suppose that $\rz(f)\neq \emptyset$.
For every $x\in X\setminus \rz(f)$, one can find $g_x\in C(X)_+^\s$ with $g_x(x)=0$ and $\rz(f)\subseteq \rp(g_x)$.
Hence, $\rz(f)\subseteq X\setminus \rz(g_x)$.
We set $\CS:= \{g_x: x\in X\setminus \rz(f)\}$.
Then $E_\CS = \rz(f)\neq \emptyset$ and $(2-f)/2\in \CF^{E_\CS}_\emptyset \subseteq \CF^{E_\CS}_{F_\CS} =\CS^\#$ (by part (a)).
However, for every $g\in \CS^\#\subseteq \CF^{E_\CS}_X$, one has $\|f-g\|=1$ (note that $\rp(g)\neq \emptyset$).

\noindent
(c) By part (a), one has $\{g\}^\# = \CF_{X\setminus \rp(g)}^{X}$.
Moreover, it is clear that $\CF_\emptyset^{\rp(g)}\subseteq \big(\CF_{X\setminus \rp(g)}^{X}\big)^{\#}$.
Conversely, suppose on the contrary that there exists $f\in \big(\CF_{X\setminus \rp(g)}^{X}\big)^{\#}$
such that $\rp(f)\not\subseteq \rp(g)$ or $\rz(f)\neq \emptyset$.
In the first case, we pick $x\in \rp(f)\setminus \rp(g)$ and $h_1\in C(X)_+^\s$ with $h_1(x)=0$ and $\rp(g)\subseteq \rp(h_1)$.
We then have the contradiction that $h_1\in \CF_{X\setminus \rp(g)}^{X}$ but $\|f - h_1\| = 1$.
In the second case, we pick $y\in \rz(f)$ and $h_2\in C(X)_+^\s$ with $h_2(y) = 1$.
Then $(1+h_2)/2 \in \CF_{\emptyset}^{X} \subseteq \CF_{X\setminus \rp(g)}^{X}$ but $\|f - (1+h_2)/2\| = 1$, which is a contradiction.

\noindent
(d) It follows from  part (c) that $\CF^{\rp(g)}_\emptyset\subseteq \CF^{\rp(f)}_\emptyset$.
Suppose on the contrary that there is $x\in \rp(g)\setminus \rp(f)$.
Consider $h\in C(X)_+^\s$ with $h(x) = 1$ and $\rp(f)\subseteq \rz(h)$.
Set $h_0:= (g+h)/2$.
Then we arrive at a contradiction that $h_0\in \CF^{\rp(g)}_\emptyset\setminus \CF^{\rp(f)}_\emptyset$.
\end{proof}

Recall that $Z\subseteq X$ is a \emph{zero set} of $X$ if $Z = \rz(f)$ for some $f\in \CPO$; equivalently, $Z = \rp(g)$ for some $g\in \CP$ (e.g., by setting $g:= (2-f)/2$).

\begin{thm}\label{thm:C(X)}
Let $X$ and $Y$ be compact Hausdorff spaces.
Suppose that $\Phi:   C(X)^\s_+ \to  C(Y)^\s_+$ is a metric preserving bijection.
Then $\Phi$ can be extended uniquely to a $^*$-isomorphism from $C(X)$ onto $C(Y)$.
More precisely, there is a homeomorphism $\sigma: Y\to X$ such that $\Phi(f)=f\circ\sigma$\ \ ($f\in C(X)_+^\s$).
\end{thm}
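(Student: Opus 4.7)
The plan is to read off a homeomorphism $\sigma:Y\to X$ from the zero-set structure that Lemma \ref{lem:S''} has already encoded metrically, and then recover $\Phi$ pointwise by exhibiting each value $f(x_0)$ as a distance in $C(X)_+^\s$ that $\Phi$ is forced to preserve. By Lemma \ref{lem:S''}(b) the condition $\rz(f)=\emptyset$ is metric, so $\Phi(\CP_X)=\CP_Y$; by Lemma \ref{lem:S''}(c), $\{g\}^{\#\#}=\CF^{\rp(g)}_\emptyset$ for $g\in\CP_X$, which makes the assignment $\tau(\rp(g)):=\rp(\Phi(g))$ well defined; and by Lemma \ref{lem:S''}(d), applied to both $\Phi$ and $\Phi^{-1}$, $\tau$ is an order-preserving bijection between the peak sets of $\CP_X$ and those of $\CP_Y$. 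Since every nonempty zero set $\rz(f)$ of $X$ (with $f\in C(X)_+^\s$) equals $\rp(1-f/2)$ with $1-f/2\in\CP_X$, this $\tau$ is a lattice isomorphism between the zero-set lattices of $X$ and $Y$. A parallel computation $\{h\}^{\#\#}=\CF^{\rp(h)}_{\rz(h)}$ for arbitrary $h\in C(X)_+^\s$, in the spirit of Lemma \ref{lem:S''}(a)--(c), then yields $\rp(\Phi(h))=\tau(\rp(h))$ and $\rz(\Phi(h))=\tau(\rz(h))$ for every $h\in C(X)_+^\s$.

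Next I would turn $\tau$ into a homeomorphism. Compact Hausdorff spaces coincide with their Stone--\v{C}ech compactifications, so their points correspond bijectively to $z$-ultrafilters. The lattice isomorphism $\tau$ transports $z$-ultrafilters on $Y$ to $z$-ultrafilters on $X$, and the resulting bijection $\sigma:Y\to X$ is characterised by $\sigma^{-1}(Z)=\tau(Z)$ for every zero set $Z$ of $X$. Closed subsets of compact Hausdorff spaces are intersections of zero sets, so $\sigma$ pulls closed sets back to closed sets and is continuous; the symmetric argument for $\tau^{-1}$ makes $\sigma^{-1}$ continuous as well.

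The decisive step is the pointwise identification $\Phi(f)(y)=f(\sigma(y))$. For $x_0\in X$ write $\CN_{x_0}:=\{h\in C(X)_+^\s: h(x_0)=1\}$ for the face of peak functions at $x_0$. I would establish the face-distance formula
\[
 f(x_0)=1-\dist(f,\CN_{x_0})\qquad(f\in C(X)_+^\s).
\]
The inequality $\ge$ is immediate from $\|f-h\|_\infty\ge|f(x_0)-1|$ for $h\in\CN_{x_0}$. For the reverse inequality, setting $s:=f(x_0)$, I would use the explicit
\[
 h:=f+(1-f)\,(g\circ f),\qquad g(t):=\begin{cases}(1-s)/(1-t),& t\in[0,s],\\ 1,& t\in[s,1],\end{cases}
\]
and verify that $h\in\CN_{x_0}$ with $\|f-h\|_\infty=1-s$ (the supremum is attained on the nonempty set $\{f\le s\}$, using $\min f\le f(x_0)=s$). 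By the first step, $\Phi(\CN_{x_0})=\CN_{\sigma^{-1}(x_0)}$, since $h\in\CN_{x_0}$ iff $x_0\in\rp(h)$ iff $\sigma^{-1}(x_0)\in\tau(\rp(h))=\rp(\Phi(h))$ iff $\Phi(h)\in\CN_{\sigma^{-1}(x_0)}$. Applying the distance formula on both sides of $\Phi$ and using the isometry of $\Phi$ yields $\Phi(f)(\sigma^{-1}(x_0))=1-\dist(\Phi(f),\CN_{\sigma^{-1}(x_0)})=1-\dist(f,\CN_{x_0})=f(x_0)$, i.e.\ $\Phi(f)=f\circ\sigma$. The map $\widetilde{\Phi}(f):=f\circ\sigma$ is then the desired $^*$-isomorphism $C(X)\to C(Y)$ extending $\Phi$, and it is the unique such extension since $C(X)$ is the real-linear span of $C(X)_+^\s$.

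I expect the two main technical hurdles to be the promotion of Lemma \ref{lem:S''}(c) from $\CP$ to arbitrary $h\in C(X)_+^\s$ via the identity $\{h\}^{\#\#}=\CF^{\rp(h)}_{\rz(h)}$ (a Urysohn bookkeeping argument handling the peak and the zero sides of $\{h\}^\#$ simultaneously) and the explicit construction of $h\in\CN_{x_0}$ realising $\dist(f,\CN_{x_0})=1-f(x_0)$; both are conceptually routine but require the usual care for Urysohn-type constructions in compact Hausdorff spaces.
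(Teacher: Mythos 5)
Your proposal is correct and shares the paper's skeleton: use the sets $\CS^\#$ of Lemma \ref{lem:S''} to show $\Phi(\CP)=\CP$ and to transport peak sets of $\CP$-functions to an order isomorphism of the zero-set lattices, recover a point map from compactness (your $z$-ultrafilter packaging is just the paper's finite-intersection-property argument with $\KF_x$ in Gillman--Jerison language), conclude it is a homeomorphism because zero sets form a basis for the closed sets, and finish with a face-distance formula $\dist(f,\cdot)=1-f(x_0)$. The one genuine divergence is in the last step, and it is where your route costs more. The paper works with $\CP^x=\{f\in\CP:f(x)=1\}$, for which $\Phi(\CP^x)=\CP^{\tau(x)}$ is already known from the $\CP$-level analysis, and realises the distance by the one-line perturbation $g_0=\min\{1,f+t-\epsilon\}\in\CP^x$; you instead use the full face $\CN_{x_0}=\{h\in C(X)_+^\s:h(x_0)=1\}$, which contains functions with zeros, so you must first extend the peak-set correspondence $\rp(\Phi(h))=\tau(\rp(h))$ from $\CP$ to all of $C(X)_+^\s$. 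Your proposed identity $\{h\}^{\#\#}=\CF^{\rp(h)}_{\rz(h)}$ does hold (the two Urysohn constructions needed are $g=\max(h,u)$ to kill the peak side and $g=\max(h-h(y),0)/(1-h(y))$ to kill the zero side), but note that the identity $\Phi\bigl(\CF^{\rp(h)}_{\rz(h)}\bigr)=\CF^{\rp(\Phi(h))}_{\rz(\Phi(h))}$ alone does not hand you $\rp(\Phi(h))=\tau(\rp(h))$; you still need to intersect both sides with $\CP$ and use that $(1+h)/2\in\CP$ has the same peak set as $h$, so that $\tau(\rp(h))\subseteq\rp(\Phi(h))$ and the reverse inclusion via $\Phi^{-1}$. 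Once that extraction is spelled out, your argument closes; but the paper's choice of the smaller face $\CP^x$ makes this entire detour unnecessary, which is the main efficiency you give up. Your explicit minimiser $h=f+(1-f)(g\circ f)$ is fine (watch the degenerate case $f(x_0)=1$, where $g$ as written is discontinuous and you should just take $h=f$), and the uniqueness remark via the linear span of $C(X)_+^\s$ is correct.
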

\begin{proof}
Observe that $\Phi(\CS^\#) = \Phi(\CS)^\#$,  for any $\CS\subseteq \CPO$.
Thus, by parts (a) and (b) of Lemma \ref{lem:S''},
$$\rz(\Phi(f)) = \emptyset, \quad \{f\}^\# = \CF^X_{X\setminus \rp(f)} \quad \text{and} \quad \Phi(\{f\}^\#) = \{\Phi(f)\}^\# = \CF^X_{X\setminus \rp(\Phi(f))} \qquad (f\in \CP).$$
If $f,g\in \CP$ satisfying $\rp(g) \subseteq \rp(f)$, then $\{f\}^\# \subseteq \{g\}^\#$
and we have $\CF^X_{X\setminus \rp(\Phi(f))} \subseteq \CF^X_{X\setminus \rp(\Phi(g))}$, which gives
$\rp(\Phi(g)) \subseteq \rp(\Phi(f))$ (because of Lemma \ref{lem:S''}(d)).
Consequently, $\Phi$ induces a map $\Psi: Z(X)\to Z(Y)$ between the collections $Z(X)$ and $Z(Y)$ of zero sets of $X$ and $Y$,
respectively, satisfying that
\begin{equation}\label{eqt:Psi-Phi}
\Psi(\rp(f)) = \rp(\Phi(f)) \qquad (f\in \CP),
\end{equation}
and that
$$
\Psi(A)\subseteq \Psi(B) \quad \text{whenever }A,B\in Z(X) \text{ with }A\subseteq B.
$$
By considering $\Phi^{-1}$, we know that $\Psi$ is an order isomorphism from $Z(X)$ onto $Z(Y)$.

Suppose that $f_1,...,f_n\in \CP$ with $\bigcap_{i=1}^n \rp(f_i)\neq \emptyset$.
If we define $f:= \sum_{i=1}^n f_i/n$, then $f\in \CP$ and $\rp(f) = \bigcap_{i=1}^n \rp(f_i)$.
For $i= 1,...,n$, it follows from $\{f_i\}^\# \subseteq \{f\}^\#$ that $\{\Phi(f_i)\}^\# \subseteq \{\Phi(f)\}^\#$, and hence $\Psi(\rp(f))\subseteq \Psi(\rp(f_i))$ (again because of Lemma \ref{lem:S''}(d)).
This shows that $\bigcap_{i=1}^n\Psi(\rp(f_i))\supseteq \Psi(\rp(f)) \neq \emptyset$.

Fix $x\in X$ and consider
$$\KF_x:= \{\rp(f): f\in \CP; x\in \rp(f)\}.$$
The above and the finite intersection property tells us that there exists $y\in \bigcap \Psi(\KF_x)$.
It follows from Relation \eqref{eqt:Psi-Phi} that  $\Psi(\KF_x)\subseteq \KF_y$.
Applying the same arguments to $\Phi^{-1}$ and $\KF_y$, we obtain $z\in X$ such that $\Psi^{-1}(\KF_y) \subseteq \KF_z$.
Therefore, $\KF_x\subseteq \KF_z$, which implies $x = z$ (otherwise, there exists $g\in \CPO$ with $g(x) = 1$ and $g(z) = 0$, which gives the contradiction that $\rp((1+g)/2)\in \KF_x\setminus \KF_z$).
We thus conclude that $\Psi(\KF_x) = \KF_y$.
This produces a bijection $\tau: X\to Y$ satisfying
\begin{equation}\label{eqt:tau}
\Psi(\KF_x) = \KF_{\tau(x)} \qquad (x\in X).
\end{equation}

Consider $g\in \CP$.
It follows from Relation \eqref{eqt:tau} that $\{\tau(x): x\in \rp(g) \}\subseteq \Psi(\rp(g))$.
Conversely, for any $y\in \Psi(\rp(g))$, if we set $x:=\tau^{-1}(y)$, then $\Psi(\KF_x) = \KF_y$ and
one knows from $\Psi(\rp(g)) \in \KF_y$ that $\rp(g)\in \KF_x$; i.e, $x\in \rp(g)$.
These show that
$$\Psi(\rp(g)) = \{\tau(x): x\in \rp(g) \} \qquad (g\in \CP).$$
In other words, the bijection $\tau$ preserves zero sets in both directions.
Since in a completely regular space, zero sets form a basis for closed subsets (\cite[Theorem 3.2]{GJ76}),
$\tau$ is a homeomorphism.

Define a map $\Lambda: \CPO \to \CPO$ by $\Lambda(f) := \Phi(f)\circ\tau$\ \ ($f\in \CPO$).
For $x\in X$, we put
$$\CP^x := \{f\in \CP: \rp(f) \in \KF_x\} = \{f\in \CP: f(x) =1\}.$$
We see from Relations \eqref{eqt:Psi-Phi} and \eqref{eqt:tau} that $\Phi(\CP^x) = \CP^{\tau(x)}$ for each $x\in X$.
Therefore, $\Lambda$ is a metric preserving bijection satisfying $\Lambda(\CP^x) = \CP^x$ ($x\in X$).

	Fix  $f\in C(X)^\s_+$ and set $\dist(f, \CP^x):= \inf\{\|f-g\|: g\in \CP^x\}$.
	For every $t\in (0,1]$ and $x\in X$, we have
	$$
	f(x)\leq 1- t \quad\text{if and only if}\quad \dist(f, \CP^x)\geq t.
	$$
	In fact, it is clear that $f(x)\leq 1-t$ implies $\dist(f, \CP^x)\geq t$.
	Conversely, suppose on the contrary that $\dist(f, \CP^x)\geq t$ but $f(x) > 1- t + \epsilon$ for some $\epsilon \in (0,t)$.
	Set $g_0(y) := \min \{1, f(y)+t-\epsilon\}$ ($y\in X$).
	Then $g_0\in \CP^x$ but $\|f-g_0\| \leq t-\epsilon$, which contradicts to the fact that $\dist(f, \CP^x)\geq t$.

	Since $\dist(\Lambda(f), \CP^x)=\dist(f, \CP^x)$, one obtains
	$$
	\{x\in X: f(x)\leq s \} = \{x\in X: \Lambda(f)(x)\leq s\} \qquad (s\in [0,1)).
	$$
	Consequently, $\Lambda(f)=f$.
	Finally, the assertion in the statement follows if we set $\sigma=\tau^{-1}$.
\end{proof}

For a measure space $(\Omega, \KA, \mu)$, denote by $M^\infty(\mu)$ the algebra of all  bounded  $\KA$-measurable functions.
If $N^\infty(\mu):= \big\{f\in M^\infty(\mu): \mu(\{x\in \Omega: f(x)\neq 0 \}) = 0 \big\}$, then
$$L^\infty(\mu):= M^\infty(\mu)/N^\infty(\mu)$$
is a Banach space under the norm induced by the essential suprema.

\begin{thm}\label{thm:L-infty}
	Let $(\Omega, \KA, \mu)$ and $(\Gamma, \KB, \nu)$ be measure spaces.
	Suppose that $\Phi: L^\infty(\mu)_+^\s \to L^\infty(\nu)_+^\s$ is a metric preserving bijection.
	Then $\Phi$ extends to a $^*$-isomorphism from $L^\infty(\mu)$ onto $L^\infty(\nu)$.
More precisely, there is a regular set isomorphism $\Lambda:\KA \to \KB$ such that $\Phi(g) = \widehat{\Lambda}(g)$ ($g\in L^\infty(\mu)_+^\s$).
\end{thm}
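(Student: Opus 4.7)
The plan is to reduce to Theorem \ref{thm:C(X)} via the Gelfand representation, then extract the required regular set isomorphism from the action on projections. Since $L^\infty(\mu)$ and $L^\infty(\nu)$ are unital commutative $C^*$-algebras, Gelfand theory produces isometric $^*$-isomorphisms $\iota_\mu : L^\infty(\mu) \to C(X)$ and $\iota_\nu : L^\infty(\nu) \to C(Y)$, where $X$ and $Y$ are the corresponding maximal ideal spaces, automatically compact Hausdorff. Any $^*$-isomorphism between commutative $C^*$-algebras preserves the positive cone and the norm, so it restricts to a metric preserving bijection between the positive unit spheres. Consequently $\widetilde{\Phi} := \iota_\nu \circ \Phi \circ \iota_\mu^{-1}$ is a metric preserving bijection from $C(X)^\s_+$ to $C(Y)^\s_+$, and Theorem \ref{thm:C(X)} yields a unique extension of $\widetilde{\Phi}$ to a $^*$-isomorphism $\widetilde{\Psi} : C(X) \to C(Y)$. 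Pulling back, $\Psi := \iota_\nu^{-1} \circ \widetilde{\Psi} \circ \iota_\mu$ is a $^*$-isomorphism $L^\infty(\mu) \to L^\infty(\nu)$ extending $\Phi$, and is uniquely determined as a linear extension because $L^\infty(\mu)$ is the linear span of its positive unit sphere.

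Next I recast $\Psi$ as $\widehat{\Lambda}$ for a regular set isomorphism $\Lambda : \KA \to \KB$. The key observation is that $\Psi$, being a $^*$-isomorphism, restricts to a Boolean-lattice isomorphism between the lattices of projections. The projections in $L^\infty(\mu)$ are precisely the classes $\od{\mathbf{1}_A}$ with $A\in\KA$, and likewise for $L^\infty(\nu)$; this identifies the projection lattice of $L^\infty(\mu)$ with $\od{\KA}$ and produces a Boolean isomorphism $\od{\Lambda} : \od{\KA} \to \od{\KB}$ such that $\Psi(\mathbf{1}_A) = \mathbf{1}_{\Lambda(A)}$ in $L^\infty(\nu)$ for any set-theoretic choice of representative $\Lambda(A) \in \KB$. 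Selecting such a section $\Lambda$ produces a map satisfying (R1)--(R3): (R1) reflects that $\od{\Lambda}$ preserves the zero of the Boolean algebra, (R2) encodes the preservation of relative complements, and (R3) the surjectivity of $\od{\Lambda}$.

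Finally, I define $\widehat{\Lambda}(\mathbf{1}_A) := \mathbf{1}_{\Lambda(A)}$ and extend by linearity to simple functions; condition (R1) makes this an isometry on simple functions in the essential-supremum norm, so it extends by uniform continuity to an isometric $^*$-isomorphism $\widehat{\Lambda} : L^\infty(\mu) \to L^\infty(\nu)$. By construction $\widehat{\Lambda}$ agrees with $\Psi$ on characteristic functions, hence on simple functions, and by the uniform density of simple functions in $L^\infty$ the two maps coincide everywhere. The main hurdle I anticipate is this final coincidence check, which boils down to a density and continuity argument that goes through cleanly once one records the standard fact that any bounded measurable function is a uniform limit of simple functions. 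All other steps are direct applications of Gelfand duality and Theorem \ref{thm:C(X)}.
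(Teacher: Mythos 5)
Your proposal is correct and follows essentially the same route as the paper: identify $L^\infty(\mu)$ and $L^\infty(\nu)$ with $C(X)$ and $C(Y)$ via Gelfand theory, apply Theorem \ref{thm:C(X)}, and then read off the regular set isomorphism from the induced Boolean isomorphism of projection lattices. The only difference is that you spell out the final step (the paper dismisses it as ``a well-known fact''), and your verification of it --- matching $\widehat{\Lambda}$ with $\Psi$ on characteristic functions and invoking uniform density of simple functions --- is sound.
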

\begin{proof}
Since $L^\infty(\mu)$ and $L^\infty(\nu)$ are unital commutative $C^*$-algebras, there exists compact Hausdorff spaces $X$ and $Y$ such that $L^\infty(\mu)\cong C(X)$ and $L^\infty(\nu)\cong C(Y)$.
By Theorem \ref{thm:C(X)}, $\Phi$ can be extended to a $^*$-isomorphism from $L^\infty(\mu)$ onto $L^\infty(\nu)$.
The second conclusion follows from a well-known fact.
\end{proof}

In the case when the measure spaces are  Radon measure spaces, we have a better presentation of $\Phi$ than the one in Theorem \ref{thm:L-infty}.
In the following, $\Omega$ is a locally compact Hausdorff space.

\medskip

Let $I$ be a positive linear functional on the ordered vector	space,
$C_c(\Omega)$, of all continuous functions on $\Omega$ with compact supports.
There is a standard procedure to construct  from $I$, a complete measure $\mu$ on $\Omega$,  whose defining $\sigma$-algebra $\KA$ contains $\KB_{\mathrm{Bo}}$, satisfying certain nice properties  (see e.g. \cite[Section 7.5]{Cohn} or \cite[Theorem 2.14]{Rudin}).
In particular, $\mu$ is finite on all compact subsets, inner regular on all open subsets (see \eqref{eqt:def-inner-reg}), and satisfies $\mu(A)=\inf\{\mu(U): A\subseteq U; \text{ $U$ is open in $\Omega$}\}$ for every $A\in \KA$.
The measure $(\KA, \mu)$ constructed from $I$ in this way is called a  \emph{Radon measure}.
Note that there are several different definitions for Radon measures in the literatures, which are closely related.
For our definition in the above, a Radon measure $(\KA,\mu)$ will be the Caratheodory extension of $(\KB_{\mathrm{Bo}}, \mu|_{\KB_{\mathrm{Bo}}})$ (see e.g., \cite{LNW-study-note} for a quick survey).

Let $(\KA, \mu)$ be a Radon measure on $\Omega$.
We set
$$\mu_\smf(E):=\sup \{\mu(A): A\in \KA^\f; A\subseteq E \} \qquad (E\in \KA).$$
Then $(\KA, \mu_\smf)$ is a semi-finite measure on $\Omega$.
Conversely, when $(\KA,\mu)$ is semi-finite,
one has $\mu_\smf = \mu$.

Let $N^\infty_\loc(\mu):= \big\{f\in M^\infty(\mu): \mu_\smf(\{x\in \Omega: f(x)\neq 0 \})=0 \big\}$.
By \cite[Theorem 3.4.1]{Cohn}, the quotient space
$L^\infty_\loc(\mu):= M^\infty(\mu)/N^\infty_\loc(\mu)$
is a Banach space (note that our $L^\infty_\loc(\mu)$ coincides with $L^\infty(\Omega, \KA, \mu)$ in \cite{Cohn}).
In fact, we have
\begin{equation}\label{eqt:L-infty-loc}
L^\infty_\loc(\mu) = L^\infty(\mu_\smf).
\end{equation}
It was shown in \cite[Theorem 7.5.4]{Cohn}
that if $(\KA, \mu)$ is a Radon measure on $\Omega$, then $L^1(\mu)^* = L^\infty_\loc(\mu)$.
As a result, $L^\infty_\loc(\mu)$ is a commutative von Neumann algebra.
In the following, we consider $Q:M^\infty(\mu)\to L^\infty_\loc(\mu)$ to be the quotient map.

\begin{cor}\label{cor:Radon}
	Let $(\KA, \mu)$ and $(\KB, \nu)$ be Radon
 measures over locally compact spaces $\Omega$ and $\Gamma$, respectively.
	Suppose that $\Phi: L^\infty_\loc(\mu)_+^\s \to L^\infty_\loc(\nu)_+^\s$ is a metric preserving bijection.

\smnoind
(a)	There is a
map $\psi: \Gamma\to \Omega$ implementing $\Phi$, i.e. the following conditions are satisfied:
	\begin{enumerate}[\ \ 1.]
		\item if $f\in M^\infty(\mu)$ then $f\circ \psi$ belongs to $M^\infty(\nu)$;
		\item $\Phi(Q(f)) = Q(f\circ\psi)$, for each $f\in M^\infty(\mu)_+$ with $\|Q(f)\| = 1$.
	\end{enumerate}

\smnoind
(b) Suppose that $\Omega$ and $\Gamma$ are metrizable.
Let $\tau:\Omega\to \Gamma$ be a map implementing $\Phi^{-1}$ as in part (a).
Then there are $\Omega'\in \KA^0$ and $\Gamma'\in \KB^0$ such that $\psi$ is a bijection from $\Gamma \setminus \Gamma'$ onto $\Omega\setminus \Omega'$ with $\psi|_{\Gamma \setminus \Gamma'}^{-1} = \tau|_{\Omega\setminus \Omega'}$.
\end{cor}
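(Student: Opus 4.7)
The plan is to extend $\Phi$ to a $^*$-isomorphism via Theorem \ref{thm:L-infty}, then use a measurable lifting to read off a point map. By Theorem \ref{thm:L-infty}, $\Phi$ extends to a unital $^*$-isomorphism $\widetilde{\Phi}: L^\infty_\loc(\mu) \to L^\infty_\loc(\nu)$. The Ionescu--Tulcea lifting theorem, applicable to the Radon measure $(\KB, \nu)$, supplies a unital $^*$-algebra homomorphism $\rho_\nu : L^\infty_\loc(\nu) \to M^\infty(\nu)$ with $Q \circ \rho_\nu = \mathrm{id}$. For each $y \in \Gamma$ the composition $\xi_y := \mathrm{ev}_y \circ \rho_\nu \circ \widetilde{\Phi}$ is a unital character on $L^\infty_\loc(\mu)$, whose restriction to the natural image of $C_0(\Omega)$ is either the zero functional or evaluation at a unique point of $\Omega$. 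Set $\psi(y)$ to be that point (or a fixed base point $x_0 \in \Omega$ on the exceptional set $E := \{y : \xi_y|_{C_0(\Omega)} \equiv 0\}$).

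The first key step is to show that $E$ is locally null. If $q := Q(\mathbf{1}_E) \neq 0$ in $L^\infty_\loc(\nu)$, then $p := \widetilde{\Phi}^{-1}(q) = Q(\mathbf{1}_A)$ is a non-zero projection with $\mu_\smf(A) > 0$. The definition of $E$ forces $q \cdot \widetilde{\Phi}(Q(f)) = 0$, hence $p \cdot Q(f) = 0$, for every $f \in C_c(\Omega)$; that is, each such $f$ vanishes $\mu_\smf$-a.e. on $A$. But semi-finiteness of $\mu_\smf$ together with inner regularity of the Radon measure $\mu$ yields a compact $K \subseteq A$ with $\mu(K) > 0$, and Urysohn's lemma supplies $f \in C_c(\Omega)_+$ with $f \equiv 1$ on $K$, contradicting the preceding vanishing. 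Consequently, for $y \notin E$ one has $(f \circ \psi)(y) = \xi_y(f) = \rho_\nu(\widetilde{\Phi}(Q(f)))(y)$ for every $f \in C_0(\Omega)$, so that $f \circ \psi \in M^\infty(\nu)$ and $Q(f \circ \psi) = \widetilde{\Phi}(Q(f))$.

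To extend this identity to all of $M^\infty(\mu)$, I would apply a monotone class argument to
$$
\CH := \bigl\{f \in M^\infty(\mu) : f \circ \psi \in M^\infty(\nu),\ Q(f \circ \psi) = \widetilde{\Phi}(Q(f))\bigr\},
$$
which contains $C_0(\Omega)$ and is closed under bounded monotone pointwise limits (since $\widetilde{\Phi}$ preserves order suprema of bounded increasing nets in $L^\infty_\loc$). Reducing first to the $\sigma$-compact support of $\mu$ (where Baire equals Borel) yields $\CH \supseteq $ all bounded Borel functions, and $\KA$-measurable functions differ from Borel ones only on $\mu$-null sets, so $\CH = M^\infty(\mu)$, completing (a). For Part (b), let $\tau : \Omega \to \Gamma$ implement $\Phi^{-1}$. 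Applying (a) to the composition gives $Q(f \circ \psi \circ \tau) = Q(f)$ for every $f \in M^\infty(\mu)$; equivalently, $f \circ \psi \circ \tau = f$ $\mu$-a.e. Metrizability of $\Omega$ combined with second countability of the $\sigma$-compact support of $\mu$ supplies a countable family $\{f_n\} \subseteq C_b(\Omega)$ separating points; the $\mu$-null sets $\Omega_n := \{x : (f_n \circ \psi \circ \tau)(x) \neq f_n(x)\}$ have union $\Omega'$, which remains $\mu$-null, and $\psi \circ \tau = \mathrm{id}$ on $\Omega \setminus \Omega'$. A symmetric argument produces $\Gamma' \in \KB^0$ with $\tau \circ \psi = \mathrm{id}$ on $\Gamma \setminus \Gamma'$; after enlarging $\Omega'$ and $\Gamma'$ to include $\tau^{-1}(\Gamma')$ and $\psi^{-1}(\Omega')$ respectively (which remain null by transferring indicator functions through $\widetilde{\Phi}^{\pm 1}$), the bijection claim follows.

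The principal difficulty is the local nullity of $E$, which is where the Radon hypothesis enters essentially via inner regularity and Urysohn's lemma. A secondary technical point is the monotone class extension from $C_0(\Omega)$ to all of $M^\infty(\mu)$ when $\Omega$ is not second countable; this is handled by first restricting to the $\sigma$-compact support of $\mu$, on which $C_0$ is separable and the Baire $\sigma$-algebra coincides with the Borel $\sigma$-algebra.
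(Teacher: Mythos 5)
Your overall route coincides with the paper's: both proofs first invoke Theorem \ref{thm:L-infty} (together with \eqref{eqt:L-infty-loc}) to extend $\Phi$ to a $^*$-isomorphism $\widetilde{\Phi}$ of the commutative von Neumann algebras $L^\infty_\loc(\mu)$ and $L^\infty_\loc(\nu)$, and then pass to a point map via lifting theory. The difference is that the paper stops there and cites the point-realization results (Theorem 1 in Section X.3 and Proposition 5 in Section X.4 of the Ionescu--Tulcea book, and Vesterstr\o m--Wils) for both (a) and (b), whereas you attempt to reprove them. Your reconstruction of the easy half is sound: the character argument producing $\psi$ off the exceptional set $E$, and the local nullity of $E$ via inner regularity and Urysohn, are correct.

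However, two steps in your version of the point-realization theorem have genuine gaps. First, $E=\bigcap_{f\in C_0(\Omega)}\{y:\rho_\nu(\widetilde{\Phi}(Q(f)))(y)=0\}$ is an intersection over an uncountable family, so you cannot form $Q(\mathbf{1}_E)$ without first proving $E\in\KB$; this needs a separate argument (e.g.\ reducing to compact subsets of $\Gamma$ and a separability device, or completeness of the Radon measure after showing $\nu^*(E\cap K)=0$). Second, and more seriously, the monotone class argument starting from $C_0(\Omega)$ only reaches the bounded \emph{Baire}-measurable functions, and your proposed repair fails on both counts: a Radon measure in this paper's sense need not have $\sigma$-compact support (counting measure on an uncountable discrete space is Radon), and even on a $\sigma$-compact locally compact Hausdorff space the Baire and Borel $\sigma$-algebras can differ (e.g.\ the ordinal space $[0,\omega_1]$). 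Passing from $\mathbf{1}_K$ as an uncountable decreasing net of continuous functions to $\mathbf{1}_K\circ\psi$ is exactly where pointwise composition and the quotient $Q$ stop commuting with suprema, and this is precisely the content of the cited Vesterstr\o m--Wils theorem; it cannot be dismissed as a routine monotone class step. A related, smaller issue occurs in your part (b): metrizability of $\Omega$ does not by itself give a countable point-separating family of continuous functions on the (possibly non-second-countable) support of $\mu$, which is why the paper again defers to Proposition 5 in Section X.4 of the Ionescu--Tulcea book rather than arguing directly.
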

\begin{proof}
(a) By Theorem \ref{thm:L-infty} and Relation \eqref{eqt:L-infty-loc}, $\Phi$ can be extended to a $^*$-isomorphism from
the commutative von Neumann algebra $L^\infty_\loc(\mu)$ onto the commutative von Neumann algebra $L^\infty_\loc(\nu)$.
The conclusion now follows from a standard lifting result (see e.g., Theorem 1 in Section X.3 of \cite{TulceaBook69}, or the main result of \cite{VW69} for the precise form that we needed).

\smnoind
(b) The assertion follows  from Proposition 5 in Section X.4 of \cite{TulceaBook69}.
\end{proof}

\section*{Acknowledgement}

The first author is supported by Hong Kong RGC Research Grant (2130501) and the second author is supported by National Natural Science Foundation of China grant (11871285).
The third author would like to thank the colleagues in
Tiangong University, Nankai University and the Chinese University of Hong Kong
for their warm hospitality  during his visits there when this work started.

\bibliographystyle{plain}

\begin{thebibliography}{Z}


\bibitem{CV}
D. Candeloro and A. Voi\v{c}i\v{c}, Radon-Nikodym Theorem, in Chapter 6 of
\emph{Handbook of Measure Theory: In two volumes} by E. Pop (eds), North Holland (2002).

\bibitem{Cohn}
D.L. Cohn, \emph{Measure theory}, 2nd ed., Birkh\"{a}user (2013).



\bibitem{Ding-C(X)}
G.G. Ding, On the extension of isometries between unit spheres of $E$ and $C(X)$, Acta Math. Sin., Engl.
Ser. \textbf{19} (2003), 793--800.

\bibitem{Ding-L-infty}
G.G. Ding, The isometric extension of the into mapping from a $\mathcal{L}^\infty(\Gamma)$-type space to some Banach space, Ill. J. Math. \textbf{51} (2007), 445--453.

\bibitem{Ding09}
G.G. Ding, On isometric extension problem between two unit spheres, Sci. China Ser. A \textbf{52} (2009),
2069--2083.

\bibitem{FW}
X.N. Fang and J.H. Wang, Extension of isometries on the unit sphere of $\ell^p(\Gamma)$ space, Sci. China Math. \textbf{53} (2010), 1085--1096.

\bibitem{FP17}
F.J. Fern\'{a}ndez-Polo and A.M. Peralta,
 Tingley's problem through the facial structure of an
atomic $JBW^*$-triple, J. Math. Anal. Appl. \textbf{455} (2017), 750--760.

\bibitem{FP18a}
F.J. Fern\'{a}ndez-Polo and A.M. Peralta,
On the extension of isometries between the unit spheres
of a $C^*$-algebra and $B(H)$,  Trans. Amer. Math. Soc., Ser. B, \textbf{5} (2018), 63--80.


\bibitem{FP18}
F.J. Fern\'{a}ndez-Polo and A.M. Peralta,
On the extension of isometries between the unit spheres of von Neumann algebras,
J. Math. Anal. Appl. \textbf{466} (2018),  127--143.

\bibitem{FP}
F.J. Fern\'{a}ndez-Polo and A.M. Peralta, Low rank compact operators and Tingley's problem, Adv. Math. \textbf{338} (2018), 1--40.



\bibitem{Frem}
D.H. Fremlin, \emph{Measure theory Vol. 2, Broad foundations}, Torres Fremlin, Colchester (2003).


\bibitem{GJ76}
L. Gillman and M. Jerison, \emph{Rings of Continuous Functions},
Springer-Verlag, New York (1976).

\bibitem{HS}
E. Hewitt and K. Stromberg, \emph{Real and abstract analysis, A modern treatment of the theory of functions of a real variable}, Springer-Verlag, New York (1965).

\bibitem{Lampeti58}
J. Lamperti, On the isometries of certain function spaces, Pacific J. Math. \textbf{8} (1958), 459--466.

\bibitem{LaNW-sf}
A.T.M. Lau, C.K. Ng and N.C. Wong, Normal states are determined by their facial distances, Bull.\ London Math Soc.,
published online on 22 May 2020,
https://doi.org/10.1112/blms.12344.

\bibitem{LNW-tran-prob}
C.W. Leung, C.K. Ng and N.C. Wong, Transition probabilities of normal states determine the Jordan structure of a quantum system, J. Math. Phys. \textbf{57} (2016), 015212, 13 pages.

\bibitem{LeNW-Lp-shell}
C.W. Leung, C.K. Ng and N.C. Wong, Metric preserving bijections between positive spherical shells of non-commutative $L^p$-spaces, J. Oper. Th. \textbf{80} (2018), 429--452.

\bibitem{LNW-study-note}
C.W. Leung, C.K. Ng and N.C. Wong, A note on Radon measures on Hausdorff spaces, preprint.


\bibitem{Lessard94}
F. Lessard, Linear operators induced by point transformations, Ann. Sci. Math. Qu\'{e}bec \textbf{18} (1994), 79--93.


\bibitem{MN12}
L. Moln\'{a}r and G. Nagy, Isometries and relative entropy preserving maps on density operators,
Lin. Multilin. Alg. \textbf{60} (2012), 93--108.

\bibitem{MT03}
L. Moln\'{a}r and W. Timmermann, Isometries of quantum states, J. Phys.\ A: Math.\ Gen.\ \textbf{36} (2003),
267--273.

\bibitem{Mori}
M. Mori, Tingley's problem through the facial structure of operator algebras, J. Math. Anal. Appl. \textbf{266} (2018), 1281--1298.

\bibitem{MO}
M. Mori and N, Ozawa, Mankiewicz's theorem and the Mazur-Ulam property for $C^*$-algebras, Studia Math. \textbf{250} (2020), 265--281.

\bibitem{Nagy13}
G. Nagy, Isometries on positive operators of unit norm, Publ. Math. Debrecen \textbf{82} (2013), 183--192.

\bibitem{Nagy18}
G. Nagy, Isometries of spaces of normalized positive operators under the operator norm,
Publ. Math. Debrecen \textbf{92} (2018)(1-2), 243--254.


\bibitem{Peralta18}
A.M. Peralta,
A survey on Tingley's problem for operator algebras, Acta Sci. Math. \textbf{84}(1-2) (2018), 81--123.

\bibitem{Peralta19}
A.M. Peralta, On the unit sphere of positive operators,
Banach J. Math. Anal. \textbf{13}(1) (2019), 91--112.

\bibitem{Rudin}
W. Rudin, \emph{Real and Complex analysis}, 3rd ed., McGraw-Hill (1987).

\bibitem{Tan}
D.N. Tan, Extension of isometries on the unit sphere of $L^p$ spaces, Acta Math. Sin., Engl. Ser. \textbf{28} (2012), 1197--1208.

\bibitem{Tingley}
D. Tingley, Isometries of the unit sphere, Geom. Dedicata \textbf{22} (1987), 371--378.

\bibitem{TulceaBook69}
A. Ionescu Tulcea and C. Ionescu Tulcea, \emph{Topics in the Theory of Lifting}, Springer-Verlag, Berlin Heidelberg, 1969.

\bibitem{VW69}
J. Vesterstr\o m and W. Wils, On point realizations of $L^\infty$-endomorphisms, Math. Scand. \textbf{25} (1969), 178--180.
\end{thebibliography}

\end{document}